\newtheorem{proposition}{Proposition}[section]
\newtheorem{remark}[proposition]{Remark}
\newtheorem{prop}[proposition]{Proposition}
\newtheorem{thm}[proposition]{Theorem}
\newtheorem{cor}[proposition]{Corollary}
\newtheorem{lem}[proposition]{Lemma}
\newtheorem{rem}[proposition]{Remark}
\newtheorem{ass}[proposition]{Assumption}
\numberwithin{equation}{section}
\numberwithin{proposition}{section}
\begin{document}

\title{Large Deviations Principle for a Large Class of One-Dimensional  Markov Processes}
\author{Konstantinos Spiliopoulos
\footnote{Lefschetz Center for
Dynamical Systems, Division of Applied Mathematics,
Brown University, Providence, RI,
02912, USA,E-mail: kspiliop@dam.brown.edu}
}
\date{}
\maketitle
%\begin{center}\textit{Lefschetz Center for
%Dynamical Systems}
%
%\textit{Division of Applied Mathematics}
%
% \textit{Brown University, Providence, RI,
%02912, USA}
%
%\textit{kspiliop@dam.brown.edu}
%\end{center}

\begin{abstract} We study the large deviations principle for one dimensional,
continuous, homogeneous, strong Markov processes that do not
necessarily behave locally as a Wiener process. Any strong Markov
process $X_{t}$ in $\mathbb{R}$ that is continuous with probability
one, under some minimal regularity conditions, is governed by a
generalized elliptic operator $D_{v}D_{u}$, where $v$ and $u$ are
two strictly increasing functions, $v$ is right continuous and $u$
is continuous. In this paper, we study large deviations principle for Markov processes
whose infinitesimal generator is $\epsilon D_{v}D_{u}$ where
$0<\epsilon\ll 1$. This result generalizes the classical large
deviations results for a large class of one dimensional "classical"
stochastic processes. Moreover, we consider reaction-diffusion
equations governed by a generalized operator $D_{v}D_{u}$. We apply
our results to the problem of wave front propagation for these type
of reaction-diffusion equations.

\vspace{0.1cm}

\noindent \textit{Key words}: Large deviations principle, Action functional, Strong Markov processes in one dimension, Wave front propagation, Reaction - diffusion equations.

\vspace{0.1cm}

\noindent \textit{Mathematics Subject Classification (2000)}: Primary 60F10, 60J60; secondary 60G17.
\end{abstract}

\section{Introduction}
It is well known that for each classical second order differential operator
\begin{equation}
Lf(x)=\frac{1}{2}a(x) \frac{d^{2}f(x)}{dx^{2}}+b(x)\frac{df(x)}{dx}\label{SmoothOperator}
\end{equation}
with smooth enough coefficients $a(x)>0$ and $b(x)$, there exists a
diffusion process $(X_{t},\mathbb{P}_{x})$ in $\mathbb{R}$ such that
$L$ is the generator of this process. The domain of definition of
$L$ is $\mathcal{D}(L)=\lbrace f: f\in
\mathcal{C}^{2}(\mathbb{R})\rbrace$. If $a(x), b(x) \in
\mathcal{C}(\mathbb{R})$ with $a(x)>0$, the trajectories of $X_{t}$
can be constructed as the solutions of the following stochastic
differential equation:
\begin{equation}
dX_{t}=\sigma(X_{t})dW_{t}+ b(X_{t})dt, \hspace{0.1cm} X_{0}=x,
\label{SDE1}
\end{equation}
where $a(x)=\sigma^{2}(x)$ and $W_{t}$ is the standard Wiener
process in $\mathbb{R}$. It is also widely known that if $X_{t}$
satisfies (\ref{SDE1}) then it behaves locally like a Wiener
process. In particular, it spends zero time at any given point
$x\in\mathbb{R}$ and it exits the interval $[x-\delta,x+\delta]$
through both ends with asymptotically equal probabilities as
$\delta\downarrow 0$.

Let now $0<\epsilon\ll 1$ be a small positive number. Denote by $X^{\epsilon}_{t}$ the process that is governed by the operator
\begin{equation}
L^{\epsilon}f(x)=\frac{\epsilon}{2}a(x) \frac{d^{2}f(x)}{dx^{2}}+b(x)\frac{df(x)}{dx}.\label{SmoothOperator1}
\end{equation}

Then, large deviations principle for the process $X^{\epsilon}_{t}$ is well known (Freidlin and Wentzel \cite{FW5}; see also \cite{F1} and \cite{FW1}). 
In particular, the action functional  for the process
$\left(X^{\epsilon}_{t}\right)_{t\in[0,T]}$, in $\mathcal{C}([0,T];\mathbb{R})$ as $\epsilon
\downarrow 0$ has the form $\frac{1}{\epsilon}S_{0T}(\phi)$, where
\begin{eqnarray}
 S_{0T}(\phi)=\begin{cases}\frac{1}{2}\int_{0}^{T}\frac{|\dot{\phi}_{s}-b(\phi_{s})|^{2}}{a(\phi_{s})}ds, & \text{if } \phi\in \mathcal{C}([0,T];\mathbb{R}) \text{ is absolutely continuous}
 \\
               +\infty, & \text{for the rest of } \mathcal{C}([0,T];\mathbb{R}).
       \end{cases}  \label{ActionFunctionalSmooth}
\end{eqnarray}

However, no general results on large deviations principle are known
for general one-dimensional, strong Markov processes that do
not behave
locally as a Wiener process. Namely, for processes that may spend positive time at a given point $x\in\mathbb{R}$ or that may exit a given interval $[x-\delta,x+\delta]$ with unequal probabilities from left and right as $\delta\downarrow 0$.
The purpose of this paper is to study exactly this situation  for a
large class of one dimensional, homogeneous, strong Markov processes that are
continuous with probability one. These processes were characterized
by Feller \cite{Feller} in a unique way through a generalized second
order elliptic operator $D_{v}D_{u}$ and its domain of definition.

As we shall also see below, the functions $v$ and $u$ that appear in the $D_{v}D_{u}$ operator  are in general non smooth. Function $u$ could be non differentiable and function $v$ could even have jump discontinuities. Note that if they were sufficiently smooth, then one would recover the classical second order operator (\ref{SmoothOperator1}) (see below for more details). These non-smoothness issues create several technical difficulties in the proof of the large deviations principle that one has to overcome. We overcome these difficulties and we provide an explicit expression for the action functional which is in terms of the $u$ and the $v$ functions under minimal assumptions on $u$ and $v$. Moreover, we apply our results to the problem of wave front propagation for reaction diffusion equations where the operator of the partial differential equation is a generalized elliptic operator $D_{v}D_{u}$. Such reaction diffusion equations can appear in applications as, for example, the limit of a family of standard reaction-diffusion equations where the diffusion and drift coefficients converge to non-smooth functions. Then, as we shall also see in section 4, the characterization of the limit through a $D_{v}D_{u}$ operator is very convenient and one can use the expression for the action functional to calculate the position of the wave front. Moreover, the non-smoothness of the $v$ and $u$ functions can create several phenomena in the propagation of the front such as change in the speed of the propagation.

In addition, such $D_{v}D_{u}$ processes arise naturally in applications as limits of
diffusion processes. For example, we mention: $(a)$ the limiting process
for nondegenerate diffusion in narrow branching tubes with
reflection at the boundary (see Freidlin and Wentzel \cite{FW3}) and $(b)$ the Wiener process with reflection in non-smooth narrow tubes (see Spiliopoulos \cite{SpilEJP2009}). In both cases, the diffusion process in the narrow branching tube or in the narrow non-smooth tube (for $(a)$ and $(b)$
respectively) converges weakly to a strong Markov process $X_{t}$, as the tube
becomes thinner and thinner. The limiting process behaves like a
standard diffusion process on the left and on the right of the point
where the branching occurs or of the discontinuity point (for $(a)$ and $(b)$ respectively) and has to satisfy a gluing condition at
that point.  Knowing the action functional for these kind of
processes, one can study several other problems of interest. We
mention, for example: $(i)$ exit problems, $(ii)$ wave front
propagation for reaction diffusion equations where the operator of
the partial differential equation is a generalized elliptic operator
$D_{v}D_{u}$ and other related problems.

In this paper we study the large deviations principle for a one
dimensional strong Markov process $X^{\epsilon}_{t}$ with generator
$\epsilon D_{v}D_{u}$, where $u(x)$ and $v(x)$ are given functions, and $X^{\epsilon}_{0}=x$.
In particular, $u(x)$ and $v(x)$ are strictly increasing functions,
$u(x)$ is continuous and $v(x)$ is right continuous and $D_{v}$,
$D_{u}$ are differentiation operators with respect to $v$ and $u$
respectively. The expression for the action functional is in Theorem
\ref{MainTheorem}. Corollary \ref{MainCorollary1} gives an
equivalent and simpler expression for the action functional under
some stricter assumptions. These results generalize the classical
large deviations results for a large class of one dimensional strong
Markov processes that cannot be expressed as solutions to stochastic
differential equations. In particular, Corollary
\ref{MainCorollary2} shows that our form of the action functional
reduces to (\ref{ActionFunctionalSmooth}) with $b=0$, if $u$ and $v$
have a special form and enough smoothness is provided.

Before mentioning the main result of this paper (Theorem \ref{MainTheorem}) we need to introduce some notation. Let us define the sets
\begin{eqnarray}
U&=&\lbrace x\in\mathbb{R}:\textrm{ the derivative of } u \textrm{ does not exist at }x \rbrace \nonumber\\
V&=&\lbrace x\in\mathbb{R}:\textrm{ the derivative of } v \textrm{ does not exist at }x, v\textrm{ is continuous at }x \rbrace \nonumber\\
V_{d}&=&\lbrace x\in\mathbb{R}: v \textrm{ is discontinuous at } x
\rbrace \label{UandVfunctionsPoints}
\end{eqnarray}
Of course, the sets $U,V$ and $V_{d}$ are at most countably infinite.

Moreover, for a continuous function $\phi:[0,T]\rightarrow
\mathbb{R}$, i.e. $\phi \in \mathcal{C}([0,T];\mathbb{R})$, we define the sets
\begin{eqnarray}
U_{\phi}&=&\lbrace t\in[0,T]:\phi(t)\in U \rbrace \nonumber\\
V_{\phi}&=&\lbrace t\in[0,T]:\phi(t)\in V \rbrace \nonumber\\
V_{d,\phi}&=&\lbrace t\in[0,T]:\phi(t)\in V_{d} \rbrace. \label{Phi_functionPoints}
\end{eqnarray}
We also define the sets
\begin{equation}
E=(U\cup V) \setminus V_{d} \hspace{0.2cm} \textrm{ and }\hspace{0.2cm} E_{\phi}=(U_{\phi}\cup V_{\phi}) \setminus V_{d,\phi}.\label{Phi_functionPoints2}
\end{equation}
Now we are ready to state the main result of this paper.
\begin{thm}
Let  $u(x)$ and $v(x)$ be strictly increasing functions, $u(x)$ be
continuous and $v(x)$ be right continuous. Assume that
there are positive constants $c_{1}$ and $c_{2}$ such that $0<
u'(x)\leq c_{1}$ and $0<c_{2}\leq v'(x)$ at the points $x$ where the
derivatives of $u(x)$ and $v(x)$ exist. Let $X^{\epsilon}_{t}$ be the strong Markov process
whose infinitesimal generator is $\epsilon D_{v}D_{u}$ for
$0<\epsilon\ll 1$ with initial point $X^{\epsilon}_{0}=x$.

Let $\phi:[0,T]\rightarrow \mathbb{R}$ be a continuous function in
$[0,T]$. We have the following.
\begin{enumerate}
 \item If the Lebesgue measure of the set $E_{\phi}$ is zero, i.e. $\Lambda (E_{\phi})=0$,
 then
 \begin{equation}
         \sigma_{\phi}(t)=\int_{0}^{t}[\frac{1}{2}\frac{dv}{du}(\phi_{s})]^{-1}ds\label{ChangeTimeWithoutN_1}
 \end{equation}
 is well defined, it is continuous and
 non-decreasing in $t$. If  $\Lambda (V_{d,\phi})=0$, then
 $\sigma_{\phi}(t)$ is strictly increasing in $t$. For functions $\phi$  such that $\Lambda (E_{\phi})>0$ we interpret, without loss of generality,
 the derivative $\frac{dv}{du}$ in the formula for $\sigma_{\phi}(t)$ as the minimum of the left and right derivatives of $v$ with respect to $u$ on the countable set $E$ 
(see Remark \ref{R:DiscontinuityPoints} and the statement of Lemma \ref{LemmaTimeChange1} for more details).
 \item Denote by $\gamma_{\phi}(t)$ the generalized inverse to $\sigma_{\phi}(t)$, i.e.
         \begin{equation}
         \gamma_{\phi}(t)=\inf \lbrace s: \sigma_{\phi}(s)>t\rbrace.\label{ChangeTimeWithoutN_2}
        \end{equation}
   The action functional  for the process
$\left(X^{\epsilon}_{t}\right)_{t\in[0,T]}$, in $\mathcal{C}([0,T];\mathbb{R})$ as $\epsilon
\downarrow 0$ has the form $\frac{1}{\epsilon}S_{0T}(\phi)$ where
\begin{eqnarray}
S_{0T}(\phi)= \begin{cases}\frac{1}{2}\int_{0}^{\sigma_{\phi}(T)}|\frac{d
u(\phi(\gamma_{\phi}(s)))}{ds}|^{2}ds, & \text{if }
u(\phi(\gamma_{\phi}(s))) \text{ is absolutely continuous and } \phi_{0}=x\\  & \\
                                                     +\infty, & \text{for the rest of }  \mathcal{C}([0,T];\mathbb{R}).   \end{cases}\label{ActionFunctional}
\end{eqnarray}
The functional
$S_{0T}(\phi)$ is lower semi-continuous in the sense of uniform
convergence. Namely, if a sequence $\phi^{n}$ converges uniformly to
$\phi$ in $\mathcal{C}([0,T];\mathbb{R})$, then $S_{0T}(\phi)\leq
\liminf_{n\rightarrow \infty}S_{0T}(\phi^{n})$. Lastly, the set $\Phi_{s}=\lbrace \phi \in \mathcal{C}([0,T];\mathbb{R}):
S_{0T}(\phi)\leq s \text{ and } \phi(0) \text{ belongs to a compact subset of } \mathbb{R} \rbrace $ is compact.
\end{enumerate}
\begin{flushright}
$\square$
\end{flushright}
\label{MainTheorem}
\end{thm}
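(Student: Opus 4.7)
The strategy is to represent $X^\epsilon$ as a time-changed Brownian motion through the scale and speed data encoded in $u$ and $v$, and then transfer Schilder's theorem through this representation. Since $D_v D_u u \equiv 0$ and $u$ is continuous strictly increasing, the process $Y^\epsilon_t := u(X^\epsilon_t)$ is a continuous local martingale starting at $u(x)$. In the smooth regime its quadratic variation equals $\epsilon \int_0^t [\tfrac12 (dv/du)(X^\epsilon_s)]^{-1}\,ds$; and via Feller's construction of $X^\epsilon$ as a Brownian motion time-changed by the speed measure $dv$ on the natural scale $u$, this identity persists in the non-smooth setting once $dv/du$ is interpreted as in the statement (the minimum of one-sided derivatives on $E$, and with a pure-jump contribution on $V_d$ accounting for the positive time spent there). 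The bounds $0<c_2\le v'$ and $u'\le c_1$ keep the quadratic-variation density bounded away from $0$ and $\infty$ on the regular part, and by Dambis--Dubins--Schwarz one obtains the representation
\begin{equation*}
u(X^\epsilon_t) \;=\; u(x) + \sqrt{\epsilon}\, B\bigl(A^\epsilon(t)\bigr),
\qquad
A^\epsilon(t) \;=\; \int_0^t \Bigl[\tfrac12 \tfrac{dv}{du}(X^\epsilon_s)\Bigr]^{-1}\,ds,
\end{equation*}
for a standard Brownian motion $B$.

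Next I would establish, as a preparatory lemma, that whenever $X^\epsilon\to\phi$ uniformly on $[0,T]$ the random time-change $A^\epsilon$ converges uniformly to the deterministic $\sigma_\phi$ of \eqref{ChangeTimeWithoutN_1}; this is where the hypothesis $\Lambda(E_\phi)=0$ enters (outside a Lebesgue-null set, $(dv/du)(X^\epsilon_s)\to(dv/du)(\phi_s)$ by continuity), and the $V_d$ correction is handled by the right-continuity of $v$. Composition with the continuous $u^{-1}$ makes the whole map $B(\cdot)\mapsto u^{-1}(u(x)+\sqrt\epsilon B(A^\epsilon(\cdot)))$ amenable to contraction-type arguments.

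The LDP itself is then extracted from Schilder's theorem for $\sqrt\epsilon B$ on the interval $[0,\sigma_\phi(T)]$. Pulling the event $\{X^\epsilon\approx\phi\}$ back by the substitution $s=\sigma_\phi(t)$, $t=\gamma_\phi(s)$ turns it into $\{\sqrt\epsilon B(s)\approx u(\phi(\gamma_\phi(s)))-u(x):\; s\in[0,\sigma_\phi(T)]\}$, whose asymptotic cost is
\begin{equation*}
\frac{1}{\epsilon}\cdot\frac{1}{2}\int_0^{\sigma_\phi(T)}\Bigl|\tfrac{d}{ds} u\bigl(\phi(\gamma_\phi(s))\bigr)\Bigr|^2 ds,
\end{equation*}
matching $\tfrac{1}{\epsilon}S_{0T}(\phi)$. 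The $+\infty$ branch of $S_{0T}$ is then forced by the requirement that $u\circ\phi\circ\gamma_\phi$ belong to the Cameron--Martin space; absolute continuity of that function is exactly the natural finite-energy condition in the time-changed picture. Lower semi-continuity in the uniform topology and compactness of the sublevel sets $\Phi_s$ are inherited from Schilder's functional once one combines continuity of $u,u^{-1}$ with the equi-Lipschitz behavior of $\sigma_{\phi^n}$ and $\gamma_{\phi^n}$ granted by $c_2\le v'$ and $u'\le c_1$; the only additional ingredient is that uniform convergence $\phi^n\to\phi$ yields, up to a subsequence, uniform convergence $u\circ\phi^n\circ\gamma_{\phi^n}\to u\circ\phi\circ\gamma_\phi$ on a common interval.

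The main technical obstacle is rigorously justifying the time-change representation and the convergence $A^\epsilon\to\sigma_\phi$ under minimal regularity of $u,v$. Two subcases are delicate: points in $V_d$, where $X^\epsilon$ is ``sticky'' and the naive differential $dv/du$ is infinite, and points in $U\cup V$ where one-sided derivatives disagree; both are absorbed into the exceptional set $E$ and handled by the one-sided derivative convention together with Lemma \ref{LemmaTimeChange1}. The matching LDP lower bound is the other nontrivial piece: for a finite-energy $\phi$ one must construct near-optimal Brownian paths tracking $u\circ\phi\circ\gamma_\phi$, which is routine for smooth $\phi$ but requires approximating a general $\phi$ by trajectories with $\Lambda(E_{\phi^n})=0$ and $\phi^n\to\phi$ together with $S_{0T}(\phi^n)\to S_{0T}(\phi)$ before invoking the standard Schilder lower bound.
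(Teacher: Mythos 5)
Your overall strategy --- put $X^{\epsilon}$ on its natural scale, represent $Y^{\epsilon}=u(X^{\epsilon})$ as a time-changed Brownian motion, transfer Schilder's theorem through the time change, and finish by contraction with $u^{-1}$ --- is exactly the paper's strategy (Proposition \ref{Proposition21}, Theorems \ref{MainTheoremHelp2} and \ref{MainTheoremHelp1}, then the contraction principle). But two steps in your plan have genuine gaps. First, your pathwise formula $A^{\epsilon}(t)=\int_{0}^{t}[\tfrac12\tfrac{dv}{du}(X^{\epsilon}_{s})]^{-1}ds$ and the claim that $A^{\epsilon}\to\sigma_{\phi}$ because ``outside a Lebesgue-null set $(dv/du)(X^{\epsilon}_{s})\to(dv/du)(\phi_{s})$ by continuity'' do not stand as written: $dv/du$ need not be continuous at points where it exists (the sets $U,V,V_{d}$ only collect points of non-existence or of discontinuity of $v$ itself), it is undefined on $E$ and degenerate on $V_{d}$, and the random path can charge these sets. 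The paper circumvents this with a two-parameter approximation: a sequence $v_{n}$ continuously differentiable with respect to $u$, Volkonskii's Theorem \ref{VolkonskiiTheorem1} guaranteeing $\tau=\lim_{n}\tau^{n}$ in probability, the exact identity $\tau^{n}_{u_{-1}(\sqrt{\epsilon}W)}(t)=\sigma^{n}_{u_{-1}(Y^{\epsilon})}(t)$, and only then continuity of $dv_{n}/du$ to compare with $\sigma^{n}_{u_{-1}(\psi)}$; Lemma \ref{LemmaTimeChange1} supplies the deterministic limit $\sigma^{n}_{\phi}\to\sigma_{\phi}$. Without some substitute for this layering, your preparatory lemma is unproved precisely in the non-smooth cases the theorem is about. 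Relatedly, when $\Lambda(V_{d,\phi})>0$ the time change $\sigma_{\phi}$ is not strictly increasing and $\gamma_{\phi}$ has jumps, so your substitution $s=\sigma_{\phi}(t)$ loses the behaviour of $\phi$ on the intervals $[\gamma_{\phi}^{-}(t),\gamma_{\phi}(t)]$; the paper needs Lemma \ref{LemmaTimeChange1a} (that $\phi$ is constant on each such interval) to justify the identity $\sup_{t\le T}|\sqrt{\epsilon}W(\sigma_{u_{-1}(\psi)}(t))-\psi(t)|=\sup_{t\le\sigma_{u_{-1}(\psi)}(T)}|\sqrt{\epsilon}W(t)-\psi(\gamma_{u_{-1}(\psi)}(t))|$.

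Second, your plan addresses only the local lower bound and the rate-function properties; the large-deviation \emph{upper} bound \eqref{ActionFunctionalStatement2} is not obtained by ``pulling back'' Schilder pointwise. The paper needs exponential tightness of $Y^{\epsilon}$ (Lemma \ref{ExponentiallyTight}, via Feng--Kurtz) to reduce to a compact set $K_{2s+1}$ and then a finite $\delta$-net of $K_{2s+1}\setminus\Psi_{s}$ on which the pointwise estimate is summed. Your proposal is silent on this reduction. Smaller omissions: lower semicontinuity and compactness of $\Phi_{s}$ are not literally ``inherited from Schilder'' --- the convergence $u\circ\phi^{n}\circ\gamma_{\phi^{n}}\to u\circ\phi\circ\gamma_{\phi}$ holds only after the analysis of Lemma \ref{LemmaTimeChange2}, whose part (iii) again leans on the constancy statement of Lemma \ref{LemmaTimeChange1a} at discontinuity points of $\gamma_{\phi}$; and the well-definedness of the functional when $\Lambda(E_{\phi})>0$ requires the argument of Remark \ref{R:DiscontinuityPoints} that $S_{E_{\phi}^{z}}(\phi)=0$ for absolutely continuous $\phi$, which your approximation-by-$\phi^{n}$ sketch does not supply.
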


The following corollary gives a useful representation of the action
functional in the case where $v$ is a continuous function. Then, of
course, $V_{d}=\emptyset$, $E=U\cup V$ and $\sigma_{\phi}(t)$ is strictly
increasing. It follows directly from Theorem \ref{MainTheorem} after
a straightforward change of variables.

\begin{cor}
In addition to the assumptions of Theorem \ref{MainTheorem}, let us
assume that the function $v(x)$ is continuous. The action functional
for the process $\left(X^{\epsilon}_{t}\right)_{t\in[0,T]}$, in
$\mathcal{C}([0,T];\mathbb{R})$ as $\epsilon \downarrow 0$ is
$\frac{1}{\epsilon}S_{0T}(\phi)$ where
\begin{eqnarray}
S_{0T}(\phi)= \begin{cases}\frac{1}{4}\int_{0}^{T}(u  \circ \phi)'(s)(v  \circ \phi)'(s)ds, & \text{if }
\phi \text{ is absolutely continuous and } \phi_{0}=x\\ & \\
                                                     +\infty, & \text{for the rest of }\mathcal{C}([0,T];\mathbb{R}).   \end{cases}\label{ActionFunctionalreduced}
\end{eqnarray}\label{MainCorollary1}
Moreover, note that for $\phi$ absolutely continuous we have $S_{E_{\phi}}(\phi)=0$.
\begin{flushright}
$\square$
\end{flushright}
\end{cor}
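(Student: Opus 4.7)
The plan is to derive the corollary from Theorem \ref{MainTheorem} by the change of variable $s=\sigma_\phi(t)$ (equivalently $t=\gamma_\phi(s)$) inside the integral defining $S_{0T}(\phi)$, using the additional simplification that $v$ continuous forces $V_d=\emptyset$, hence by the theorem $\sigma_\phi$ is strictly increasing and continuous, so $\gamma_\phi$ is its genuine (continuous) inverse. Under this hypothesis, $E=U\cup V$ and $E_\phi=U_\phi\cup V_\phi$, which is the image under $\phi^{-1}$ of the countable set $E$, so $\phi$ is constant on the complement of intervals whenever it visits $E$ on a set of positive measure.

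First I would treat the nontrivial case: $\phi$ absolutely continuous with $\phi_0=x$ and $\Lambda(E_\phi)=0$. By the theorem, $\sigma_\phi'(t)=\bigl[\tfrac12\tfrac{dv}{du}(\phi_t)\bigr]^{-1}$ a.e.\ in $[0,T]$, and $\sigma_\phi$ is absolutely continuous and strictly increasing. For such $\phi$, the composition $u\circ\phi$ is absolutely continuous (since $u$ is Lipschitz by $0<u'\le c_1$), so by the chain rule $(u\circ\phi)'(t)$ exists a.e. Performing the substitution in the integral from Theorem \ref{MainTheorem} gives
\begin{equation*}
\frac{1}{2}\int_0^{\sigma_\phi(T)} \Bigl|\frac{du(\phi(\gamma_\phi(s)))}{ds}\Bigr|^2 ds
= \frac{1}{2}\int_0^T \Bigl|\frac{d(u\circ\phi)(t)}{dt}\Bigr|^2\bigl(\sigma_\phi'(t)\bigr)^{-2}\sigma_\phi'(t)\,dt
= \frac{1}{4}\int_0^T \bigl((u\circ\phi)'(t)\bigr)^2\,\frac{dv}{du}(\phi_t)\,dt.
\end{equation*}
The chain rule identity $(v\circ\phi)'(t)=\tfrac{dv}{du}(\phi_t)\cdot(u\circ\phi)'(t)$, valid a.e.\ off $E_\phi$ (where all three derivatives exist), converts the last integrand into $(u\circ\phi)'(t)(v\circ\phi)'(t)$, yielding \eqref{ActionFunctionalreduced}.

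Next I would handle the general absolutely continuous $\phi$ by showing the contribution $S_{E_\phi}(\phi)=\tfrac14\int_{E_\phi}(u\circ\phi)'(v\circ\phi)'\,ds$ equals zero, which simultaneously justifies extending the above formula across $E_\phi$ and proves the final remark of the corollary. The key observation is that $E_\phi=\phi^{-1}(E)$ with $E$ countable; since $\phi$ is absolutely continuous, $\phi'=0$ a.e.\ on each level set $\{\phi=a\}$, and so $\phi'=0$ a.e.\ on $E_\phi$. Applying the chain rule $(u\circ\phi)'=u'(\phi)\phi'$ and $(v\circ\phi)'=v'(\phi)\phi'$ (wherever these exist) gives $(u\circ\phi)'(v\circ\phi)'=0$ a.e.\ on $E_\phi$, whence $S_{E_\phi}(\phi)=0$. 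Interpreting $\tfrac{dv}{du}$ on $E$ via the one-sided derivatives, as prescribed by the theorem, one also sees that $\sigma_\phi'(t)$ is well-defined a.e.\ on $E_\phi$ with $\phi'(t)=0$, so the change-of-variables identity above remains valid.

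Finally, the $+\infty$ case is immediate: since $u$ is Lipschitz with $u'\le c_1$, absolute continuity of $u\circ\phi\circ\gamma_\phi$ (required for finiteness in Theorem \ref{MainTheorem}) is equivalent to absolute continuity of $\phi$ under the present hypotheses, because $\gamma_\phi$ and its inverse $\sigma_\phi$ are bi-Lipschitz when $\tfrac{dv}{du}$ is bounded above and away from zero — which follows from $0<c_2\le v'$ and $0<u'\le c_1$. Thus $S_{0T}(\phi)=+\infty$ precisely when $\phi$ is not absolutely continuous or $\phi_0\ne x$. The main technical point, and the only place where care is really needed, is verifying that the change of variable can be pushed through on $E_\phi$ despite the non-differentiability of $u$ or $v$ there; this is handled uniformly by the ``$\phi'=0$ a.e.\ on $E_\phi$'' argument, which makes the bad set invisible to the integral.
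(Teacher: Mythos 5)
Your proposal is correct and follows essentially the same route as the paper: the paper obtains the corollary from Theorem \ref{MainTheorem} by exactly the change of variables $s=\sigma_{\phi}(t)$ you carry out (using $V_{d}=\emptyset$ so that $\sigma_{\phi}$ is a continuous strictly increasing bijection with $\sigma_{\phi}'=[\tfrac12\tfrac{dv}{du}(\phi_t)]^{-1}$), and the claim $S_{E_{\phi}}(\phi)=0$ is handled in Remark \ref{R:DiscontinuityPoints} and at the end of the proof of Theorem \ref{MainTheoremHelp1} by the same level-set argument you give, namely that an absolutely continuous $\phi$ has $\dot\phi=0$ a.e.\ on each set $\{t:\phi_t=z\}$ and $E$ is countable. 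One caveat: your justification of the $+\infty$ case overclaims, since $0<u'\le c_{1}$ and $c_{2}\le v'$ give only a lower bound $\tfrac{dv}{du}\ge c_{2}/c_{1}$ (hence $\sigma_{\phi}$ Lipschitz), not an upper bound, so $\gamma_{\phi}$ need not be Lipschitz and the asserted bi-Lipschitz equivalence between absolute continuity of $u(\phi(\gamma_{\phi}(\cdot)))$ and of $\phi$ does not follow from the stated hypotheses; this point is not addressed in the paper either, and would need a separate argument.
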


\begin{remark}\label{R:DiscontinuityPoints}
As we saw in the statement of Theorem \ref{MainTheorem} part (i), $\sigma_{\phi}(t)$ is well defined for $\phi$ such that $\Lambda (E_{\phi})=0$.
 As a consequence, the action functional is also well defined. For $\phi$ such that $\Lambda (E_{\phi})>0$ we defined $\sigma_{\phi}(t)$ using formula 
(\ref{ChangeTimeWithoutN_1}) by interpreting the derivative $\frac{dv}{du}$  as the minimum of the left and right derivatives of $v$ with respect to $u$ on the countable set $E$. 
This is done without loss of generality. In particular, let us pick a point $z\in \left(U \cup V\right)\setminus V_{d}$ and denote $E^{z}_{\phi}=\{t\in[0,T]: \phi_{t}=z\}$.
 Then, for $\phi$ absolutely continuous, we have $S_{E^{z}_{\phi}}(\phi)=0$ (independently of the interpretation of the $u$ and $v$ derivatives on $E$). 
More details will be given in the proof of Theorem \ref{MainTheoremHelp1}.
\begin{flushright}
$\square$
\end{flushright}
\end{remark}

For the convenience of the reader, we briefly recall the Feller
characterization of all one-dimensional Markov processes, that are
continuous with probability one (for more details see \cite{Feller};
also \cite{M1}). All one-dimensional strong Markov processes that
are continuous with probability one, can be characterized (under
some minimal regularity conditions) by a generalized second order
differential operator $D_{v}D_{u}f$ with respect to two increasing
functions $u(x)$ and $v(x)$; $u(x)$ is continuous, $v(x)$ is right
continuous. In addition, $D_{u}$, $D_{v}$ are differentiation
operators with respect to $u(x)$ and $v(x)$ respectively, which are
defined as follows:

$D_{u}f(x)$ exists if $D_{u}^{+}f(x)=D_{u}^{-}f(x)$, where the left
derivative of $f$ with respect to $u$ is defined as follows:
\begin{displaymath}
D_{u}^{-}f(x)=\lim_{h\downarrow 0}\frac{f(x-h)-f(x)}{u(x-h)-u(x)}
\hspace{0.2cm} \textrm{ provided the limit exists.}
\end{displaymath}
The right derivative $D_{u}^{+}f(x)$ is defined similarly. If $v$ is
discontinuous at $y$ then
\begin{displaymath}
 D_{v}f(y)=\lim_{h\downarrow 0}\frac{f(y+h)-f(y-h)}{v(y+h)-v(y-h)}.
\end{displaymath}
\begin{rem}\label{R:ReductionToSimpleCase}
For example, it is easy to see that the operator $L$  in
(\ref{SmoothOperator}) can be written as a $D_{v}D_{u}$ operator
with $u$ and $v$ as follows:
\begin{equation}
u(x)=\int_{0}^{x} e^{-\int_{0}^{y}\frac{2b(z)}{a(z)}dz} dy
\hspace{0.3cm}\textrm{ and }\hspace{0.3cm}
v(x)=\int_{0}^{x}\frac{2}{a(y)} e^{\int_{0}^{y}\frac{2b(z)}{a(z)}dz}
dy. \label{UandVfunctions}
\end{equation}
The representation of $u(x)$ and $v(x)$ in (\ref{UandVfunctions}) is
unique up to multiplicative and additive constants. In fact, one can
multiply one of these functions by some constant and divide the
other function by the same constant or add a constant to either of
them.
\begin{flushright}
$\square$
\end{flushright}
\end{rem}
Corollary \ref{MainCorollary2} is easily obtained from  Corollary \ref{MainCorollary1} and Remark \ref{R:ReductionToSimpleCase}. It shows in which way the action functional 
in (\ref{ActionFunctionalSmooth})
 is
generalized by the functional in (\ref{ActionFunctional}) in the case of $b=0$.
\begin{cor}
If $u(x)$ and $v(x)$ are given by
(\ref{UandVfunctions}) and $a(x)$, $b(x)$ are regular enough, then
$E_{\phi}=\emptyset$ and the action functional in
(\ref{ActionFunctional}), or equivalently in (\ref{ActionFunctionalreduced}), coincides with
(\ref{ActionFunctionalSmooth}) with $b=0$.\label{MainCorollary2}
\begin{flushright}
$\square$
\end{flushright}
\end{cor}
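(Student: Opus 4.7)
The plan is to apply Corollary \ref{MainCorollary1} directly, so the whole argument is a short computation once the hypotheses are checked. First, I would verify the claim $E_{\phi}=\emptyset$. When $a,b$ are regular enough (say $a\in\mathcal{C}(\mathbb{R})$ with $a>0$ and $b\in\mathcal{C}(\mathbb{R})$), the integrands in (\ref{UandVfunctions}) are continuous, so both $u$ and $v$ are $\mathcal{C}^{1}$ with everywhere strictly positive derivatives. Therefore the sets $U$, $V$ and $V_{d}$ defined in (\ref{UandVfunctionsPoints}) are all empty, which forces $E=\emptyset$ and hence $E_{\phi}=\emptyset$ for every continuous $\phi$. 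In particular we are inside the hypotheses of Corollary \ref{MainCorollary1}, and $\sigma_{\phi}(t)$ is strictly increasing.

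Next I would compute $u'v'$. From (\ref{UandVfunctions}),
\begin{equation*}
u'(y)=e^{-\int_{0}^{y}\frac{2b(z)}{a(z)}dz},\qquad v'(y)=\frac{2}{a(y)}e^{\int_{0}^{y}\frac{2b(z)}{a(z)}dz},
\end{equation*}
so the two exponentials cancel and
\begin{equation*}
u'(y)v'(y)=\frac{2}{a(y)},
\end{equation*}
independently of $b$. Since $u,v$ are $\mathcal{C}^{1}$, for absolutely continuous $\phi$ starting at $x$ the chain rule gives $(u\circ\phi)'(s)=u'(\phi(s))\dot{\phi}(s)$ and similarly for $v$, so the integrand of (\ref{ActionFunctionalreduced}) becomes $\frac{1}{4}\cdot\frac{2}{a(\phi(s))}|\dot{\phi}(s)|^{2}=\frac{1}{2}\,|\dot{\phi}(s)|^{2}/a(\phi(s))$, which is exactly the integrand of (\ref{ActionFunctionalSmooth}) with $b=0$. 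On the complement (non absolutely continuous $\phi$, or $\phi_{0}\neq x$), both functionals equal $+\infty$, so the equality holds on all of $\mathcal{C}([0,T];\mathbb{R})$.

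There is no real obstacle here; the only conceptual point worth flagging is why the drift $b$ disappears from the small-noise rate even though it was present in the construction of $u,v$. The reason is that the generator in our setting is $\epsilon D_{v}D_{u}=\epsilon L$, so both the diffusion and the drift of $L$ are scaled by $\epsilon$; in the $\epsilon\downarrow 0$ regime, only the diffusion contributes to the action, which is exactly what the formula produces. If one preferred to match (\ref{ActionFunctionalSmooth}) with a genuine nonzero drift, one would need to rescale $L$ differently (leaving $b$ unscaled); this lies outside the scope of Theorem \ref{MainTheorem} and is consistent with the statement of the corollary.
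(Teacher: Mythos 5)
Your proof is correct and follows the same route the paper intends: the paper states only that the corollary "is easily obtained from Corollary \ref{MainCorollary1} and Remark \ref{R:ReductionToSimpleCase}," and your argument is precisely that computation, with $E=\emptyset$ from the $\mathcal{C}^{1}$ regularity of $u,v$ and the cancellation $u'(y)v'(y)=2/a(y)$ turning the integrand of (\ref{ActionFunctionalreduced}) into that of (\ref{ActionFunctionalSmooth}) with $b=0$. Your closing remark on why the drift drops out (the whole operator, drift included, is scaled by $\epsilon$) is a correct reading of the situation and matches the paper's own discussion of the drift question in its concluding remarks.
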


The rest of the paper is organized as follows. In section 2, we
prove that (\ref{ActionFunctional}) is the action functional for
$\left(X^{\epsilon}_{t}\right)_{t\in[0,T]}$ assuming that
(\ref{ChangeTimeWithoutN_1}) is well defined. In section 3, we
prove: (a) that $\sigma_{\phi}(t)$ in (\ref{ChangeTimeWithoutN_1})
is well defined for functions $\phi$ such that the Lebesgue measure
of the set $E_{\phi}$ is zero and (b) several auxiliary results that
are used in section 2 to prove Theorem \ref{MainTheorem}. In section
4, we consider reaction-diffusion equations governed by a
generalized operator $D_{v}D_{u}$ and we apply our results to the
problem of wave front propagation for these type of
reaction-diffusion equations.
Lastly, section 5 includes some concluding comments and remarks on
future work.
\section{Estimates for probabilities of large deviations}

In this section we prove that (\ref{ActionFunctional}) is the action functional for $\left(X^{\epsilon}_{t}\right)_{t\in[0,T]}$.
However, first we introduce some notation that we will use throughout the paper and we state the results of \cite{Volkonskii1} that we use. Then we state without proof some auxiliary results. The proof of these auxiliary lemmas will be given in the next section.

In this and the following sections we will denote by $C_{0}$ any
unimportant constants that do not depend on any small or big parameter. The
constants may change from place to place though, but they will
always be denoted by the same $C_{0}$. 
Moreover, we fix two functions $u(x)$ and $v(x)$ that have the properties of Theorem \ref{MainTheorem} and we denote by $X^{\epsilon}_{t}$ for the 
process whose infinitesimal generator is $\epsilon DvDu$. Additionally, let $u_{-1}(x)$ denoting the inverse function of $u(x)$.

Furthermore, for a continuous function $\phi:[0,T]\rightarrow \mathbb{R}$ we define the functions $\sigma_{u_{-1}(\phi)}(t)$ and $\gamma_{u_{-1}(\phi)}(t)$ in the same way to (\ref{ChangeTimeWithoutN_1}) and (\ref{ChangeTimeWithoutN_2}) with $u_{-1}(\phi)$ in place of $\phi$.

The following key result is a restatement of Theorem 4 in \cite{Volkonskii1}.

\begin{thm}
Let  $u(x)$ and $v(x)$ be strictly increasing functions, $u(x)$ be continuous and $v(x)$ be right continuous.
Let $\left(v_{n}(x)\right)_{n\in\mathbb{N}}$ be a sequence of strictly increasing functions, continuously differentiable with respect to $u(x)$ and converging to $v(x)$ at every continuity point of $v(x)$. Moreover,  $W_{t}$ denotes the standard one dimensional Wiener process.

We introduce the variables $\tau^{n}_{u_{-1}(W)}(t)$ by the equations
\begin{equation}
\int^{\tau^{n}_{u_{-1}(W)}(t)}_{0}\frac{1}{2}\frac{dv_{n}}{du}(u_{-1}(W_{s}))ds=t
\label{RandomChangeTimeWithN_1}
\end{equation}
Then we have:
\begin{enumerate}
\item  $\lim_{n\rightarrow \infty}\tau^{n}_{u_{-1}(W)}(t)$ exists uniformly in $t\geq 0$ on any finite time interval in the sense of convergence in probability, 
for all measures $\mathbb{P}_{x}$ and independently of the choice of the sequence
 $\left(v_{n}\right)_{n\in\mathbb{N}}$. Moreover,  $\lim_{n\rightarrow \infty}\tau^{n}_{u_{-1}(W)}(t)$ is strictly increasing in $t$ with $\mathbb{P}_{x}$ probability $1$.
 \item Denote
        \begin{equation}
         \tau_{u_{-1}(W)}(t)=\lim_{n\rightarrow \infty}\tau^{n}_{u_{-1}(W)}(t)\label{RandomChangeTimeWithoutN_1}
        \end{equation}
       The process
      \begin{equation}
        X_{t}=u_{-1}[W_{\tau_{u_{-1}(W)}(t)}]\label{InTermsOfWienerProcess1}
      \end{equation}
        is a homogeneous, strong Markov process whose infinitesimal generator is $D_{v}D_{u}$. The domain of definition of the $D_{v}D_{u}$ operator is
   \begin{eqnarray}
     \mathcal{D} (D_{v}D_{u})&=&\lbrace f: f\in \mathcal{C}_{c}(\mathbb{R})\textrm{, where at each non smoothness point }\nonumber\\
&  &x_{i} \textrm{ of } u \textrm{ and } v \textrm{ the gluing condition holds}\label{DomainOfDefinition1}\\
& &D_{u}^{+}f(x_{i})-D_{u}^{-}f(x_{i})=[v(x_{i}+)-v(x_{i}-)] DvDuf(x_{i}) \nonumber\\
& &  \textrm{and } DvDuf(x_{i})=\lim_{x\rightarrow
x_{i}^{+}}DvDuf(x)=\lim_{x\rightarrow x_{i}^{-}}DvDuf(x) \rbrace.
\nonumber
\end{eqnarray}
\end{enumerate}\label{VolkonskiiTheorem1}
\begin{flushright}
$\square$
\end{flushright}
\end{thm}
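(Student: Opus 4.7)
The plan is to construct $X_t$ as a time-changed image of the Wiener process under $u_{-1}$, using scale/speed-measure intuition: $u$ plays the role of a scale function (so that $u(X_t)$ is a local martingale, hence a time change of Brownian motion) and $v$ plays the role of the speed measure determining the clock. The sequence $v_n$ is an auxiliary tool to reduce to a smooth case where Itô calculus applies directly, followed by a passage to the limit that extracts information about the non-smooth $v$ through Brownian local time.

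I would first verify the statement when $v$ itself is differentiable with respect to $u$ (so that one may take $v_n=v$ throughout). Direct computation gives $D_v D_u f(x) = \tfrac{1}{2} a(x) f''(x) + b(x) f'(x)$ with $a = 2/(u' v')$, and one checks $D_v D_u u = 0$ so $u$ is a scale function. By Itô's formula and Dambis--Dubins--Schwarz, the associated diffusion $X^n_t$ satisfies $u(X^n_t) = W_{A^n(t)}$ for a standard Brownian motion $W$, where $A^n(t) = \langle u(X^n)\rangle_t$. A change of variable shows $A^n$ is the inverse of the map $\sigma \mapsto \int_0^\sigma \tfrac{1}{2}(dv_n/du)(u_{-1}(W_r))\,dr$, which is exactly the relation (\ref{RandomChangeTimeWithN_1}); thus $X^n_t = u_{-1}(W_{\tau^n_{u_{-1}(W)}(t)})$ with generator $D_{v_n}D_u$ in the smooth case.

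To establish convergence of $\tau^n_{u_{-1}(W)}$, let $L(s,y)$ denote Brownian local time. By the occupation time formula and a Stieltjes change of variable $y = u(x)$,
\begin{equation*}
B^n(s) := \int_0^s \tfrac{1}{2}\tfrac{dv_n}{du}(u_{-1}(W_r))\,dr = \int_{\mathbb{R}} \tfrac{1}{2} L(s,u(x))\,dv_n(x).
\end{equation*}
Since $v_n \to v$ at every continuity point of $v$ and the $v_n$ are monotone, while $x \mapsto L(s,u(x))$ is a.s.\ continuous and compactly supported for fixed $s$, a Helly--Bray argument yields $B^n(s) \to B(s) := \int_{\mathbb{R}} \tfrac{1}{2} L(s,u(x))\,dv(x)$ uniformly on compact time sets, almost surely. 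Because $v$ is strictly increasing, $dv$ places positive mass on every open interval, and $L(s,\cdot)$ is strictly positive on the range of $W_{[0,s]}$; hence $B$ is strictly increasing, so the inverses $\tau^n = (B^n)^{-1}$ converge uniformly to a strictly increasing $\tau := B^{-1}$, proving part (i).

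For part (ii), $X_t = u_{-1}(W_{\tau(t)})$ is strong Markov because $\tau$ is a continuous additive functional of the strong Markov process $W$. To identify the generator, I would apply Itô--Tanaka to $f \circ u_{-1}$ for $f \in \mathcal{D}(D_v D_u)$, express $f(X_t)-f(x)$ as a martingale plus an absolutely continuous term against $dB$, and re-parametrize via $dB(s) = \tfrac{1}{2}(dv/du)(u_{-1}(W_s))\,ds$ to recognize the integrand as $D_v D_u f(X_r)$. The gluing condition at each jump point $x_i$ of $v$ emerges automatically: the atomic mass $v(x_i+)-v(x_i-)$ in $dv$ weights the Itô--Tanaka local-time contribution coming from the discontinuity of $D_u f$, yielding the balance in (\ref{DomainOfDefinition1}). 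The main obstacle is precisely this passage from smooth $v_n$ to a possibly discontinuous $v$; it is the use of Brownian local time integrated against the Lebesgue--Stieltjes measure $dv$ that makes the construction robust to jumps and makes the gluing condition appear intrinsically rather than being imposed ad hoc.
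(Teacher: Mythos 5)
You should first note that the paper does not prove this statement at all: Theorem \ref{VolkonskiiTheorem1} is explicitly introduced as ``a restatement of Theorem 4 in \cite{Volkonskii1}'' and is imported as a black box, so there is no in-paper proof to compare against. Judged on its own terms, your sketch is essentially sound and follows the now-standard scale-function/speed-measure route: the computation $a=2/(u'v')$, the identity $D_vD_u u=0$, the Dambis--Dubins--Schwarz reduction showing that the quadratic variation clock $A^n$ inverts $s\mapsto\int_0^s\tfrac12(dv_n/du)(u_{-1}(W_r))\,dr$, and the occupation-time identity $B^n(s)=\int_{\mathbb{R}}\tfrac12 L(s,u(x))\,dv_n(x)$ are all correct, and the Helly--Bray passage to $dv$ is exactly what makes the limit independent of the approximating sequence and yields the atomic (gluing) contribution at jumps of $v$. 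This differs in flavor from Volkonskii's original argument, which works with additive functionals and resolvent/semigroup identities for the time-changed process rather than with Brownian local time; your route buys a transparent derivation of the gluing condition and gives almost-sure (hence in-probability) uniform convergence of $\tau^n$ directly. Two points deserve more care if this were to be written out: (a) uniformity in $s$ of $B^n\to B$ needs the Dini/P\'olya argument (monotone functions converging pointwise to a continuous limit converge uniformly on compacts), not just pointwise Helly--Bray; and (b) the It\^o--Tanaka identification of the generator requires checking that for $f$ in the stated domain the second-derivative measure of $f\circ u_{-1}$ is exactly $D_vD_uf(u_{-1}(y))\,d(v\circ u_{-1})(y)$ including its atoms, which is where the gluing condition enters and should be verified explicitly rather than asserted to ``emerge automatically.''
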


\begin{rem}
Theorem \ref{VolkonskiiTheorem1} essentially says that any
continuous, homogeneous, strong Markov process that can be
characterized through a $D_{v}D_{u}$ operator, can be obtained from
a Wiener process after a random time change and a space
transformation. Moreover, a simple application of It\^{o} formula
shows that if $u(x)$ and $v(x)$ are given by (\ref{UandVfunctions})
and $a(x), b(x)$ are regular enough, then
$X_{t}=u_{-1}[W_{\tau_{u_{-1}(W)}(t)}]$ satisfies (\ref{SDE1}).
\begin{flushright}
$\square$
\end{flushright}
\end{rem}

We will also need the following results whose proof will be given in
the next section.  Lemma \ref{LemmaTimeChange1} is essentially part (i) of Theorem \ref{MainTheorem}. Lemmas \ref{LemmaTimeChange1a} and \ref{LemmaTimeChange2} are technical lemmas that will be used in the proof of lower semicontinuity of the functional $S_{0T}(\phi)$ and compactness of the set $\Phi_{s}=\lbrace \phi \in \mathcal{C}([0,T];\mathbb{R}):
S_{0T}(\phi)\leq s \rbrace $. Proposition \ref{Proposition21} gives a
representation of the process $X^{\epsilon}_{t}$ that is governed by
the generator $\epsilon D_{v}D_{u}$ in the spirit of Theorem
\ref{VolkonskiiTheorem1}. Lemma
\ref{ExponentiallyTight} discusses the exponential tightness of
$Y^{\epsilon}_{t}=u(X^{\epsilon}_{t})$. Using the aforementioned results we prove Theorems
\ref{MainTheoremHelp2} and \ref{MainTheoremHelp1} which discuss the large
deviations principle for $Y^{\epsilon}_{t}=u(X^{\epsilon}_{t})$.

The proof of Theorem \ref{MainTheorem} follows from Remark \ref{R:DiscontinuityPoints}, Theorems
\ref{MainTheoremHelp2} and \ref{MainTheoremHelp1} and
the well known contraction principle for large deviations. Namely, we find the action functional of $X^{\epsilon}_{t}$
by using the action functional for $Y^{\epsilon}_{t}$ and the fact that $u(x)$ is invertible.

\begin{lem}
Let  $u(x)$ and $v(x)$ be strictly increasing functions as in Theorem \ref{MainTheorem}. In addition, let $\left(v_{n}(x)\right)_{n\in N}$ be a sequence of strictly increasing functions,
continuously differentiable with respect to $u(x)$ and converging to $v(x)$ at every continuity point of $v(x)$.
Moreover, assume that $0<c_{2}\leq v'_{n}(x)$ for every $n$.

Let $\phi:[0,T]\rightarrow \mathbb{R}$ be a continuous function in
$[0,T]$, i.e. $\phi \in \mathcal{C}([0,T];\mathbb{R})$. We introduce the
functions $\sigma^{n}_{\phi}(t)$ by the formula
\begin{equation}
\sigma^{n}_{\phi}(t)=\int_{0}^{t}[\frac{1}{2}\frac{dv_{n}}{du}(\phi_{s})]^{-1}ds
\label{ChangeTimeWithN_1}
\end{equation}
The functions $\sigma^{n}_{\phi}(t)$ can be regarded as functions of
$t$ or as functionals of $\phi$. If  $\Lambda (E_{\phi})=0$ then
$\lim_{n\rightarrow \infty}\sigma^{n}_{\phi}(t)$ exists uniformly in
$t$ on any finite time interval and independently of the choice of
the sequence $\left(v_{n}\right)_{n\in N}$. Moreover, it is continuous and non-decreasing
in $t$. If $\Lambda (V_{d,\phi})=0$, then $\lim_{n\rightarrow
\infty}\sigma^{n}_{\phi}(t)$ is strictly increasing in $t$. We write
\begin{equation}
\sigma_{\phi}(t)=\int_{0}^{t}[\frac{1}{2}\frac{dv}{du}(\phi_{s})]^{-1}ds=\lim_{n\rightarrow
\infty}\sigma^{n}_{\phi}(t).\label{ChangeTimeWithoutN_11}
\end{equation}\label{LemmaTimeChange1}
\begin{flushright}
$\square$
\end{flushright}
\end{lem}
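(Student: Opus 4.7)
The plan is to prove this by a compactness argument combined with a change of variables. First I observe that the integrand $[\tfrac12 \tfrac{dv_n}{du}(\phi_s)]^{-1} = 2u'(\phi_s)/v_n'(\phi_s)$ is bounded above by $2c_1/c_2$ uniformly in $n$ and $s$, by the hypotheses $u'(x) \le c_1$ and $v_n'(x) \ge c_2$. Hence the family $\{\sigma^n_\phi\}_{n\ge 1}$ is equi-Lipschitz on $[0,T]$ with constant $2c_1/c_2$ and all start at $0$, so by Arzel\`a--Ascoli every subsequence has a further subsequence converging uniformly on $[0,T]$ to some Lipschitz limit $\zeta$. I will show any such subsequential limit must equal
\[
\sigma_\phi(t) := \int_0^t \Big[\tfrac12 \tfrac{dv}{du}(\phi_s)\Big]^{-1}\, ds
\]
(with the convention that the integrand vanishes when $\phi_s \in V_d$); this forces the whole sequence to converge uniformly to $\sigma_\phi$, independently of the choice of $(v_n)$.

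To identify $\zeta$ I pass to the coordinates $y = u(x)$. Writing $\psi_s := u(\phi_s)$, $\tilde v := v \circ u_{-1}$ and $\tilde v_n := v_n \circ u_{-1}$, one checks that $\psi$ is continuous, $\tilde v_n$ is strictly increasing and $C^1$ with $\tilde v_n' \ge c_2/c_1$, and $\tilde v_n \to \tilde v$ at every continuity point of $\tilde v$. Moreover
\[
\sigma^n_\phi(t) = \int_0^t \frac{2\, ds}{\tilde v_n'(\psi_s)}.
\]
The discontinuity set of $\tilde v$ is $\tilde V_d = u(V_d)$, and its non-differentiability set at continuity points corresponds under $u$ to the set $E$, so the hypothesis $\Lambda(E_\phi) = 0$ says $\psi_s$ spends Lebesgue measure zero time in this exceptional set.

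The key step is to identify the limit on the splitting $[0,T] = V_{d,\phi} \cup G \cup N$, where $N = E_\phi$ is null and $G$ consists of those $s$ at which $\tilde v'(\psi_s)$ exists. On $V_{d,\phi}$, each point $\phi_s = y_0 \in V_d$ corresponds to a fixed positive jump $\Delta \tilde v(u(y_0)) > 0$; the convergence of $\tilde v_n$ to $\tilde v$ at continuity points forces the average of $\tilde v_n'$ over any small neighborhood of $u(y_0)$ to tend to infinity, and a covering argument over the countable set $V_d$ together with the uniform bound $2/\tilde v_n' \le 2c_1/c_2$ off those neighborhoods shows that the integral over $V_{d,\phi}$ tends to $0$. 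On $G$, the uniformly $L^\infty$-bounded sequence $1/\tilde v_n'$ admits a weak-$*$ limit $h$ along the chosen subsequence, and testing against indicators $\mathbf{1}_{[a,b]}$ with $a,b$ continuity points of $\tilde v$, combined with the identity $\int_a^b \tilde v_n'(y)\,dy = \tilde v_n(b) - \tilde v_n(a) \to \tilde v(b) - \tilde v(a)$ and the monotonicity of $\tilde v_n, \tilde v$, identifies $h$ with $1/\tilde v'$ a.e.\ on the complement of $\tilde V_d$. Integrating against the occupation measure of $\psi|_G$ then yields convergence of $\int_G 2\,ds/\tilde v_n'(\psi_s)$ to $\int_G 2\,ds/\tilde v'(\psi_s)$.

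Putting the two pieces together shows $\zeta = \sigma_\phi$, whence $\sigma^n_\phi \to \sigma_\phi$ uniformly on $[0,T]$ and independently of $(v_n)$. Continuity and monotonicity of $\sigma_\phi$ in $t$ follow at once from the integral representation with bounded nonnegative integrand. Under the further assumption $\Lambda(V_{d,\phi}) = 0$ one has $0 < \tilde v'(\psi_s) < \infty$ for a.e.\ $s$, so the integrand is strictly positive on a set of full measure in every subinterval, yielding strict monotonicity. The main technical obstacle is the identification of the weak-$*$ limit $h$ with $1/\tilde v'$ on $G$: since pointwise convergence of $\tilde v_n$ does not imply pointwise convergence of $\tilde v_n'$, the argument must lean on integrated identities, on the monotonicity of $\tilde v_n$ and $\tilde v$, and on the uniform lower bound $\tilde v_n' \ge c_2/c_1$, rather than on any pointwise behavior of the derivatives.
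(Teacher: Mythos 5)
Your opening reduction coincides with the paper's: the uniform bound $[\tfrac12\tfrac{dv_n}{du}]^{-1}\le 2c_1/c_2$ makes $\{\sigma^n_\phi\}$ equi-Lipschitz, so uniform convergence reduces to identifying pointwise (subsequential) limits; the paper phrases this as reducing uniformity to the uniform boundedness of the first derivatives of $\sigma^n_\phi$. The divergence is in the identification step, and that is where your argument has two genuine gaps. Both stem from the same source: pointwise convergence of the monotone functions $v_n$ to $v$ at continuity points controls only the measures $dv_n$ weakly, whereas the integrand of $\sigma^n_\phi$ is the \emph{reciprocal} of $dv_n/du$, a nonlinear functional of $v_n'$. (This is exactly why Volkonskii's Theorem \ref{VolkonskiiTheorem1} is robust in the choice of $(v_n)$ --- there the time change is linear in $dv_n$ integrated against the continuous occupation density of Brownian motion --- while the deterministic analogue is not.)

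Concretely: on $V_{d,\phi}$ the value $\psi_s=u(\phi_s)$ lies in the countable set $u(V_d)$, so the occupation measure of $\psi$ restricted to $V_{d,\phi}$ is purely atomic and the quantity to control is $\sum_{y_0}2\Lambda(\{s:\psi_s=y_0\})/\tilde v_n'(y_0)$ --- a \emph{pointwise} statement about $\tilde v_n'$ at the jump points. The blow-up of the average of $\tilde v_n'$ over shrinking neighborhoods of $y_0$ (which is all that $\tilde v_n\to\tilde v$ yields) is compatible with $\tilde v_n'(y_0)=c_2/c_1$ for every $n$, in which case your integral over $V_{d,\phi}$ does not vanish; moreover Jensen runs the wrong way, so a large average of $\tilde v_n'$ does not force $\int 1/\tilde v_n'\,dy$ to be small even in the $dy$ sense. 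On $G$, weak-* convergence of $\tilde v_n'$ to $d\tilde v$ does not determine the weak-* limit $h$ of $1/\tilde v_n'$: Cauchy--Schwarz applied to your integrated identity gives only $h\ge 1/\tilde v'$ a.e., and the inequality can be strict. For instance, with $u(x)=x$, $v(x)=x$ and $v_n'(y)=1+\tfrac12\sin(2\pi ny)$ every hypothesis of the lemma holds with $c_2=\tfrac12$ and $E_\phi=\emptyset$, yet for $\phi_s=s$ one computes $\sigma^n_\phi(t)\to 4t/\sqrt{3}\ne 2t=\sigma_\phi(t)$; so the identification step, as you argue it, fails. (A further issue: even granting $h=1/\tilde v'$ a.e.\ $dy$, passing to $\int_G 2\,ds/\tilde v_n'(\psi_s)$ needs the occupation measure of $\psi|_G$ to be absolutely continuous, which fails already for constant $\phi$.) By comparison, the paper's own proof is an acknowledged sketch: it declares the convergence off $V_{d,\phi}$ to be clear, splits $V_{d,\phi}$ into $V_{d,\phi}\setminus U_\phi$ and $V_{d,\phi}\cap U_\phi$ and handles the latter by convention, and in the proof of Theorem \ref{MainTheoremHelp1} it explicitly restricts the admissible sequences $(v_n)$ when the limit would otherwise be sequence-dependent. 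The delicate point is therefore shared, but your proposal commits to two specific mechanisms --- neighborhood averages controlling pointwise reciprocals, and identification of a weak-* limit of a nonlinear function of $v_n'$ --- that do not work; any complete argument must impose on $(v_n)$ more than convergence at continuity points, e.g.\ convergence of $dv_n/du$ to $dv/du$ wherever the latter exists and divergence to $+\infty$ on $V_d$.
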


\begin{lem}
Let $\phi:[0,T]\rightarrow \mathbb{R}$ be a continuous function in
$[0,T]$ such that $\sigma_{\phi}(t)$ is well defined for $t\in[0,T]$. Function $\gamma_{\phi}(t)$ is right continuous.
Let us define $\gamma_{\phi}^{-}(t)=\lim_{s\rightarrow
t^{-}}\gamma_{\phi}(s)$. For any $t\in
[0,\sigma_{\phi}(T)]$ that is not a continuity point of
$\gamma_{\phi}(t)$,  the function $\phi(s)$ is constant for
$s\in[\gamma_{\phi}^{-}(t),
\gamma_{\phi}(t)]$.\label{LemmaTimeChange1a}
\begin{flushright}
$\square$
\end{flushright}
\end{lem}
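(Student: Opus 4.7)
The plan is to argue in four short steps. First, right continuity of $\gamma_\phi$ is the standard fact about generalized inverses of non-decreasing functions: for $t_n \downarrow t$, continuity of $\sigma_\phi$ from Lemma~\ref{LemmaTimeChange1} gives $\{s : \sigma_\phi(s) > t\} = \bigcup_n \{s : \sigma_\phi(s) > t_n\}$, so $\gamma_\phi(t) = \inf_n \gamma_\phi(t_n) = \lim_n \gamma_\phi(t_n)$.

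Second, at a discontinuity $t_0$ of $\gamma_\phi$ we have $\gamma_{\phi}^{-}(t_0) < \gamma_\phi(t_0)$. The infimum definition of $\gamma_\phi$ together with continuity of $\sigma_\phi$ gives $\sigma_\phi(\gamma_\phi(t_0)) = t_0$, and letting $t' \uparrow t_0$ in the same identity yields $\sigma_\phi(\gamma_{\phi}^{-}(t_0)) = t_0$. Non-decrease of $\sigma_\phi$ then forces $\sigma_\phi \equiv t_0$ on the closed interval $[\gamma_{\phi}^{-}(t_0), \gamma_\phi(t_0)]$.

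Third, this flatness translates back through the integral formula $\sigma_\phi(t)=\int_0^t[\frac{1}{2}\frac{dv}{du}(\phi_s)]^{-1}ds$: the nonnegative integrand vanishes for a.e.~$s$ in $[\gamma_{\phi}^{-}(t_0),\gamma_\phi(t_0)]$. Under the bounds $u' \leq c_1$, $v' \geq c_2$ and the assumption $\Lambda(E_\phi) = 0$ implicit in the well-definedness of $\sigma_\phi$, the integrand can vanish only at points where $\phi_s$ lies at a jump of $v$, so $\phi_s \in V_d$ for almost every $s$ in the interval.

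Finally, I would upgrade this a.e.~statement to pointwise constancy of the continuous function $\phi$ on the closed interval $[\gamma_{\phi}^{-}(t_0),\gamma_\phi(t_0)]$. This last step is the main obstacle, since in general a continuous function taking values a.e.~in a countable set need not be constant (the Cantor function is a classical instance). The resolution must exploit the strict monotonicity of $v$, the continuity of $\phi$, and the realization of $\sigma_\phi$ as the uniform limit of the strictly increasing smooth approximants $\sigma^{n}_{\phi}$ from Lemma~\ref{LemmaTimeChange1}, so as to rule out the Cantor-type level set structure of $\phi$ and force $\phi$ to remain at a single point of $V_d$ throughout the interval.
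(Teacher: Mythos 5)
Your first three steps reproduce the paper's proof exactly: right continuity of $\gamma_{\phi}$ from the continuity of $\sigma_{\phi}$, the identity $\sigma_{\phi}(\gamma_{\phi}^{-}(t))=\sigma_{\phi}(\gamma_{\phi}(t))$ at a jump of $\gamma_{\phi}$, and the consequence that the nonnegative integrand $[\frac{1}{2}\frac{dv}{du}(\phi_{s})]^{-1}$ vanishes for a.e.\ $s$ in $[\gamma_{\phi}^{-}(t),\gamma_{\phi}(t)]$, which (since the integrand is strictly positive off $V_{d}\cup E$ and $\Lambda(E_{\phi})=0$) gives $\phi(s)\in V_{d}$ for a.e.\ $s$ in that interval. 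The problem is your fourth step: this is where the lemma actually lives, and you do not prove it --- you correctly identify the Cantor-function obstruction (a continuous function taking values a.e.\ in a countable set need not be constant) and then only assert that ``the resolution must exploit'' various features of the setup. Naming the obstacle is perceptive, but it is not an argument, so the proposal has a genuine gap at its decisive step.

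For the record, the paper closes this step by a first-excursion argument rather than by any appeal to the approximants $\sigma^{n}_{\phi}$. Write $a=\gamma_{\phi}^{-}(t)$, $b=\gamma_{\phi}(t)$, $x_{0}=\phi(a)$, and suppose $\phi$ attains some value $x_{1}\in V_{d}\setminus\{x_{0}\}$ on $[a,b]$; let $s_{1}$ be the first time in $(s_{0},b]$ at which $\phi\in V_{d}\setminus\{x_{0}\}$, where $s_{0}$ is as in the paper's proof. On $(s_{0},s_{1})$ the path avoids $V_{d}\setminus\{x_{0}\}$ by construction, so the a.e.\ statement from your step 3 forces $\phi=x_{0}$ on a set of full measure there; full measure implies dense, and continuity of $\phi$ then pins $\phi\equiv x_{0}$ on $[s_{0},s_{1}]$, which is incompatible with $\phi(s_{1})=x_{1}\neq x_{0}$ (the paper phrases the contradiction as $\sigma_{\phi}(s_{0})<\sigma_{\phi}(s_{1})$ against the flatness of $\sigma_{\phi}$). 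Hence $\phi$ can meet only the single point $x_{0}$ of $V_{d}$, so $\phi=x_{0}$ a.e.\ on $[a,b]$ and, again by density plus continuity, everywhere. Note that the a.e.-to-everywhere upgrade for a \emph{single} value is immediate; the entire content of the step is excluding a second value of $V_{d}$, and that is precisely the piece your proposal leaves open. (Your unease is not unfounded --- the paper's key inequality at this point is asserted rather than derived, and the argument implicitly relies on $V_{d}$ not being dense in the range of $\phi$ over the excursion --- but a complete proof must at least supply this localization mechanism, which the proposal does not.)
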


\begin{lem}
Let $\phi^{n}$ be a sequence of functions in $\mathcal{C}([0,T];\mathbb{R})$ that converges to $\phi$ uniformly in $\mathcal{C}([0,T];\mathbb{R})$. Under the assumptions of Theorem \ref{MainTheorem} for the functions $v$ and $u$ we have:
\begin{enumerate}
 \item For any $t\in[0,T]$ we have that $\sigma_{\phi}(t)=\lim_{n\rightarrow \infty}\sigma_{\phi^{n}}(t)$. The convergence holds uniformly in $t$.
 \item For any $t\in [0,\sigma_{\phi}(T)]$ that is a continuity point of $\gamma_{\phi}(t)$ we
       have $\gamma_{\phi}(t)=\lim_{n\rightarrow \infty}\gamma_{\phi^{n}}(t)$.
 \item For any $t\in [0,\sigma_{\phi}(T)]$ we have $\phi(\gamma_{\phi}(t))=\lim_{n\rightarrow \infty}\phi^{n}(\gamma_{\phi^{n}}(t))$.
\end{enumerate}\label{LemmaTimeChange2}
\begin{flushright}
$\square$
\end{flushright}
\end{lem}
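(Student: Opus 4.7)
For Part~1, the key observation is that the integrand $[\tfrac{1}{2}\tfrac{dv}{du}(\phi_s)]^{-1}$ is uniformly bounded by $2c_1/c_2$, since the hypotheses give $\tfrac{dv}{du}=v'/u'\geq c_2/c_1$ where both derivatives exist (and the same bound persists under the minimum convention of Remark~\ref{R:DiscontinuityPoints} on the countable set $E$). Thus $\sigma_\phi$ and each $\sigma_{\phi^n}$ are $(2c_1/c_2)$-Lipschitz in $t$, so the family is equicontinuous; by Arzel\`a--Ascoli, uniform convergence in $t$ reduces to pointwise convergence. To establish the latter I would use the smooth approximating sequence $v_k$ from Lemma~\ref{LemmaTimeChange1} as an intermediary: for each fixed $k$ the function $g_k:=2[dv_k/du]^{-1}$ is continuous and bounded by $2c_1/c_2$, so dominated convergence gives that $\phi\mapsto\sigma^k_\phi(t)=\int_0^t g_k(\phi_s)\,ds$ is continuous in sup norm on $\mathcal{C}([0,T];\mathbb{R})$. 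Combining this with Lemma~\ref{LemmaTimeChange1} applied at both $\phi$ and $\phi^n$, the decomposition
\[
|\sigma_{\phi^n}(t)-\sigma_\phi(t)|\leq|\sigma_{\phi^n}(t)-\sigma^k_{\phi^n}(t)|+|\sigma^k_{\phi^n}(t)-\sigma^k_\phi(t)|+|\sigma^k_\phi(t)-\sigma_\phi(t)|
\]
yields the result via a $3\varepsilon$-argument. The delicate point, which I view as the main obstacle, is making the first summand small uniformly in $n$ for large $k$; exploiting that $\{\phi^n\}$ is relatively compact in sup norm (all functions eventually take values in a common compact subset of $\mathbb{R}$), one can choose the $v_k$ (for instance as piecewise linear interpolants of $v$ in $u$-coordinates on a refining partition of that compact set) so that $\sigma^k\to\sigma$ uniformly over functions with range in that set.

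Part~2 is the classical fact that uniform convergence of non-decreasing functions implies pointwise convergence of their generalized inverses at every continuity point of the limit. Given a continuity point $t$ of $\gamma_\phi$ and $\varepsilon>0$, I would choose $s_1<\gamma_\phi(t)<s_2$ with $s_2-s_1<\varepsilon$ and $\sigma_\phi(s_1)<t<\sigma_\phi(s_2)$ (possible because $\gamma_\phi$ is continuous at $t$). Part~1 then gives $\sigma_{\phi^n}(s_1)<t<\sigma_{\phi^n}(s_2)$ for all $n$ large, so $s_1\leq\gamma_{\phi^n}(t)\leq s_2$.

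For Part~3, at a continuity point $t$ of $\gamma_\phi$ the triangle inequality
\[
|\phi^n(\gamma_{\phi^n}(t))-\phi(\gamma_\phi(t))|\leq\|\phi^n-\phi\|_\infty+|\phi(\gamma_{\phi^n}(t))-\phi(\gamma_\phi(t))|
\]
reduces matters to Part~2 together with continuity of $\phi$. At a discontinuity point $t$ of $\gamma_\phi$, Lemma~\ref{LemmaTimeChange1a} yields that $\phi$ is constant on $[\gamma_\phi^-(t),\gamma_\phi(t)]$; the uniform convergence of $\sigma_{\phi^n}\to\sigma_\phi$ from Part~1, combined with the definition of the generalized inverse, forces every subsequential limit of $\gamma_{\phi^n}(t)$ to lie in $[\gamma_\phi^-(t),\gamma_\phi(t)]$, on which $\phi$ takes the constant value $\phi(\gamma_\phi(t))$. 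Combined with $\|\phi^n-\phi\|_\infty\to 0$ this closes the argument.
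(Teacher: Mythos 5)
Your parts (ii) and (iii) are correct (conditionally on part (i)) and follow essentially the same route as the paper: part (ii) is the standard sandwich argument combining monotonicity of the generalized inverses with the uniform estimate of part (i) and continuity of $\gamma_{\phi}$ at $t$, and part (iii) combines a triangle-inequality split with the localization $\gamma_{\phi^{n}}(t)\in[\gamma_{\phi}(t-\epsilon),\gamma_{\phi}(t+\epsilon)]$ for large $n$ and with Lemma \ref{LemmaTimeChange1a} at the discontinuity points of $\gamma_{\phi}$. The only cosmetic difference is that in the second summand you evaluate $\phi$ at both time points whereas the paper evaluates $\phi^{n}$; the two are interchangeable given uniform convergence.

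The genuine gap is exactly the step you flag in part (i), and it cannot be repaired by a cleverer choice of the approximants $v_{k}$: the obstruction is not the interpolation scheme but the fact that the integrand $x\mapsto 2\,\frac{du}{dv}(x)$ is genuinely discontinuous at the points of $V_{d}$ (it drops to $0$ there), so $\sigma_{\phi^{n}}(t)\to\sigma_{\phi}(t)$ can fail outright. Concretely, take $u(x)=x$ and $v(x)=x+\mathbf{1}_{\{x\geq 0\}}$ (so $c_{1}=c_{2}=1$, $V_{d}=\{0\}$ and $E=\emptyset$), with $\phi\equiv 0$ and $\phi^{n}\equiv 1/n$. Then $\sigma_{\phi^{n}}(t)=2t$ for every $n$, while under the paper's conventions (the contribution over $V_{d,\phi}$ is sent to $0$ in the proof of Lemma \ref{LemmaTimeChange1}, consistently with the delay of the process at jump points of $v$ in Theorem \ref{VolkonskiiTheorem1}) one has $\sigma_{\phi}(t)=0$; so no bound on $\sup_{n}|\sigma^{k}_{\phi^{n}}(t)-\sigma_{\phi^{n}}(t)|$ that tends to $0$ in $k$ can exist, for any admissible choice of $v_{k}$. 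To be fair, the paper offers no argument here either --- it asserts that part (i) ``is easily seen to hold by the uniform convergence of $\phi^{n}$ to $\phi$'' --- so you have isolated a real weakness of the lemma rather than missed a proof. Your Lipschitz/Arzel\`a--Ascoli reduction of uniform to pointwise convergence is correct and worth keeping; but the pointwise statement requires additional hypotheses (for instance $v$ continuous and $\frac{du}{dv}$ continuous at $\phi_{s}$ for almost every $s$), under which the conclusion follows directly from bounded convergence applied to $\int_{0}^{t}2\frac{du}{dv}(\phi^{n}_{s})\,ds$, with no uniformity in $n$ of the $v_{k}$-approximation needed at all.
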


\begin{prop}
Let us define $X^{\epsilon}_{t}=u_{-1}[\sqrt{\epsilon}W_{\tau_{u_{-1}(\sqrt{\epsilon}W)}(t)}]$, where $\tau_{u_{-1}(\sqrt{\epsilon}W)}(t)$ is defined as in
 (\ref{RandomChangeTimeWithoutN_1}) with $\sqrt{\epsilon}W$ in place of $W$. Then, the infinitesimal generator of $X^{\epsilon}_{t}$ is $\epsilon DvDu$.\label{Proposition21}
\begin{flushright}
$\square$
\end{flushright}
\end{prop}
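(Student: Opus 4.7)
The plan is to reduce the claim to Theorem \ref{VolkonskiiTheorem1} applied to a standard Brownian motion, using Brownian scaling to absorb the factor $\sqrt{\epsilon}$. The key observation is that $\sqrt{\epsilon}\,W_t$ is a Wiener process run at speed $\epsilon$: if we set $\widetilde{W}_s := \sqrt{\epsilon}\,W_{s/\epsilon}$ for $s\geq 0$, then $\widetilde{W}$ is again a standard one-dimensional Wiener process and the pathwise identity $\sqrt{\epsilon}\,W_t = \widetilde{W}_{\epsilon t}$ holds for every $t\geq 0$.

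Next, I would rewrite the random time change $\tau_{u_{-1}(\sqrt{\epsilon}W)}(t)$ in terms of $\widetilde{W}$. Working at the level of the smooth approximating sequence $(v_n)$ furnished by Theorem \ref{VolkonskiiTheorem1}, the defining relation (\ref{RandomChangeTimeWithN_1}), with $\sqrt{\epsilon}\,W$ in place of $W$, reads
\begin{equation*}
\int_{0}^{\tau^{n}_{u_{-1}(\sqrt{\epsilon}W)}(t)} \tfrac{1}{2}\tfrac{dv_{n}}{du}\bigl(u_{-1}(\sqrt{\epsilon}\,W_{s})\bigr)\, ds = t.
\end{equation*}
Substituting $\sqrt{\epsilon}\,W_{s} = \widetilde{W}_{\epsilon s}$ and changing variables $r = \epsilon s$ converts this into
\begin{equation*}
\int_{0}^{\epsilon\,\tau^{n}_{u_{-1}(\sqrt{\epsilon}W)}(t)} \tfrac{1}{2}\tfrac{dv_{n}}{du}\bigl(u_{-1}(\widetilde{W}_{r})\bigr)\, dr = \epsilon t,
\end{equation*}
which is exactly the equation defining $\tau^{n}_{u_{-1}(\widetilde{W})}(\epsilon t)$. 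Hence $\epsilon\,\tau^{n}_{u_{-1}(\sqrt{\epsilon}W)}(t) = \tau^{n}_{u_{-1}(\widetilde{W})}(\epsilon t)$, and passing to the limit $n\to\infty$ by Theorem \ref{VolkonskiiTheorem1}(i) yields $\tau_{u_{-1}(\sqrt{\epsilon}W)}(t) = \tfrac{1}{\epsilon}\,\tau_{u_{-1}(\widetilde{W})}(\epsilon t)$.

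Combining these two ingredients, the defining formula for $X^{\epsilon}_{t}$ collapses to
\begin{equation*}
X^{\epsilon}_{t} = u_{-1}\bigl[\sqrt{\epsilon}\,W_{\tau_{u_{-1}(\sqrt{\epsilon}W)}(t)}\bigr] = u_{-1}\bigl[\widetilde{W}_{\epsilon\,\tau_{u_{-1}(\sqrt{\epsilon}W)}(t)}\bigr] = u_{-1}\bigl[\widetilde{W}_{\tau_{u_{-1}(\widetilde{W})}(\epsilon t)}\bigr] =: Z_{\epsilon t},
\end{equation*}
where $Z_{s} := u_{-1}\bigl[\widetilde{W}_{\tau_{u_{-1}(\widetilde{W})}(s)}\bigr]$ is precisely the process built from the standard Wiener process $\widetilde{W}$ in Theorem \ref{VolkonskiiTheorem1}. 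That theorem identifies $Z$ as a homogeneous strong Markov process with infinitesimal generator $D_{v}D_{u}$ on the domain (\ref{DomainOfDefinition1}). Since $X^{\epsilon}_{t} = Z_{\epsilon t}$ is a deterministic linear time rescaling of $Z$, its infinitesimal generator equals $\epsilon$ times that of $Z$, namely $\epsilon\,D_{v}D_{u}$, as claimed.

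The only point requiring some care is that the pathwise change-of-variables computation is performed at the level of the smooth approximations $v_n$ and must survive the passage to the non-smooth $v$. This is however automatic: both sides of the identity $\epsilon\,\tau^{n}_{u_{-1}(\sqrt{\epsilon}W)}(t) = \tau^{n}_{u_{-1}(\widetilde{W})}(\epsilon t)$ converge in probability uniformly on compact time intervals and independently of the chosen sequence $(v_n)$ by Theorem \ref{VolkonskiiTheorem1}(i), so the limiting identity for $\tau_{u_{-1}(\sqrt{\epsilon}W)}$ and $\tau_{u_{-1}(\widetilde{W})}$ holds, completing the reduction.
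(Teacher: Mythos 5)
Your proposal is correct and follows essentially the same route as the paper: both establish the scaling identity $\epsilon\,\tau^{n}_{u_{-1}(\sqrt{\epsilon}W)}(t)=\tau^{n}_{u_{-1}(\cdot)}(\epsilon t)$ at the level of the approximating sequence $(v_n)$, pass to the limit, conclude $X^{\epsilon}_{t}=Z_{\epsilon t}$ for the Volkonskii process $Z$ with generator $D_{v}D_{u}$, and read off the factor $\epsilon$ from the deterministic time rescaling (the paper verifies this last step via exit times from shrinking intervals). Your explicit introduction of $\widetilde{W}_{s}=\sqrt{\epsilon}\,W_{s/\epsilon}$ makes the Brownian-scaling step pathwise rather than distributional, which is a slightly cleaner bookkeeping of the same argument.
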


\begin{lem}
The family
$Y^{\epsilon}_{t}=\sqrt{\epsilon}W_{\tau_{u_{-1}(\sqrt{\epsilon}W)}(t)}$,
is exponentially tight in $\mathcal{C}([0,T];\mathbb{R})$: for
any $\alpha>0$  and $\delta>0$ there exists a compact
$K_{\alpha}\subset \mathcal{C}([0,T];\mathbb{R})$ such that

$$\mathbb{P}(\rho_{0T}(Y^{\epsilon}_{\cdot}, K_{a})\geq \delta)<\exp \lbrace -\frac{a}{\epsilon}\rbrace
$$ for $\epsilon>0$ small enough.\label{ExponentiallyTight}
\begin{flushright}
$\square$
\end{flushright}
\end{lem}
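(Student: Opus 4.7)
The plan is to exploit the fact that the random time change $\tau(t):=\tau_{u_{-1}(\sqrt{\epsilon}W)}(t)$ satisfies a \emph{deterministic} Lipschitz bound. This reduces exponential tightness of $Y^{\epsilon}$ to the classical exponential estimates on the modulus of continuity of a Wiener process on a fixed finite interval, after which the compact set $K_{a}$ is built by the Arzelà--Ascoli theorem.

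First I would establish the pathwise bound
\[
 0\le \tau(t)-\tau(s)\le \tfrac{2c_{1}}{c_{2}}(t-s),\qquad 0\le s\le t\le T.
\]
This follows from \eqref{RandomChangeTimeWithN_1} together with the hypotheses $u'\le c_{1}$, $v'\ge c_{2}$: choosing the approximating sequence $(v_{n})$ of Theorem~\ref{VolkonskiiTheorem1} so that $v_{n}'\ge c_{2}$ is preserved (e.g.\ mollify $v(x)-c_{2}x$ and add $c_{2}x$ back), the integrand $\tfrac{1}{2}\tfrac{dv_{n}}{du}$ is bounded below by $c_{2}/(2c_{1})$. Hence each $\tau^{n}$ is Lipschitz with constant $2c_{1}/c_{2}$, and the bound is inherited by the limit $\tau$ provided by Theorem~\ref{VolkonskiiTheorem1}(i). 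Setting $M = 2c_{1}T/c_{2}$, the Lipschitz bound yields, for every realization,
\[
 \sup_{|t-s|\le \delta}|Y^{\epsilon}_{t}-Y^{\epsilon}_{s}|\le \sqrt{\epsilon}\,\omega_{W}\!\bigl(\tfrac{2c_{1}}{c_{2}}\delta;\,[0,M]\bigr),
\]
where $\omega_{W}(\cdot\,;I)$ denotes the modulus of continuity of $W$ on $I$.

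Next I would apply the standard reflection-principle estimate: partitioning $[0,M]$ into $N=\lceil M/\delta\rceil$ intervals of length $\delta$ and using Gaussian tails of Brownian increments gives
\[
 \mathbb{P}\bigl(\omega_{W}(\delta;[0,M])>\eta/\sqrt{\epsilon}\bigr)\le \frac{C_{0}}{\delta}\exp\!\Bigl(-\frac{c_{3}\eta^{2}}{\epsilon\,\delta}\Bigr)
\]
for absolute constants $C_{0},c_{3}>0$. Given $a>0$, I would choose sequences $\delta_{k}\downarrow 0$ and $\eta_{k}\downarrow 0$ with $\eta_{k}^{2}/\delta_{k}$ growing fast enough so that, for all sufficiently small $\epsilon$, the $k$-th bound above is at most $2^{-k-1}\exp(-a/\epsilon)$. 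Setting
\[
 K_{a}=\bigl\{f\in \mathcal{C}([0,T];\mathbb{R}): f(0)=Y^{\epsilon}_{0},\ \omega_{f}\bigl(\tfrac{2c_{1}}{c_{2}}\delta_{k}\bigr)\le \eta_{k}\text{ for every }k\bigr\},
\]
Arzelà--Ascoli yields compactness in $\mathcal{C}([0,T];\mathbb{R})$, and a union bound over $k$ gives $\mathbb{P}(Y^{\epsilon}\notin K_{a})\le \exp(-a/\epsilon)$, which dominates $\mathbb{P}(\rho_{0T}(Y^{\epsilon},K_{a})\ge \delta)$ as required.

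The only delicate point is transferring the uniform Lipschitz estimate from the smooth approximants $\tau^{n}$ to the limit $\tau$. Since Theorem~\ref{VolkonskiiTheorem1}(i) guarantees uniform-in-$t$ convergence in probability and is insensitive to the choice of $(v_{n})$, it is enough to construct the approximating sequence respecting $v_{n}'\ge c_{2}$; a uniform-in-$n$ Lipschitz constant is then automatically inherited by $\tau$. Once this is settled, the rest reduces to standard Wiener-process estimates on a deterministic compact time interval.
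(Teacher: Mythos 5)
Your argument is correct, and it is essentially a self-contained, elementary replacement for what the paper dispatches in one line: the paper simply combines the representation $Y^{\epsilon}_{t}=\sqrt{\epsilon}W_{\tau_{u_{-1}(\sqrt{\epsilon}W)}(t)}$ with the general exponential-tightness criterion of Theorem 4.1 in Feng--Kurtz, whereas you verify the needed compact-containment and equicontinuity estimates by hand. The pivot of your proof --- the deterministic Lipschitz bound $\tau(t)-\tau(s)\le \tfrac{2c_{1}}{c_{2}}(t-s)$ --- is exactly right: from
\begin{equation*}
t-s=\int_{\tau^{n}(s)}^{\tau^{n}(t)}\tfrac{1}{2}\tfrac{dv_{n}}{du}\bigl(u_{-1}(\sqrt{\epsilon}W_{r})\bigr)\,dr\ \ge\ \tfrac{c_{2}}{2c_{1}}\bigl(\tau^{n}(t)-\tau^{n}(s)\bigr),
\end{equation*}
using $\tfrac{dv_{n}}{du}=v_{n}'/u'\ge c_{2}/c_{1}$, the bound holds uniformly in $n$ and passes to the limit by Theorem \ref{VolkonskiiTheorem1}(i) (along an a.s.\ convergent subsequence), and it confines $\tau$ to the fixed interval $[0,2c_{1}T/c_{2}]$. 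After that the reflection-principle estimate for the Wiener modulus of continuity and the Arzel\`a--Ascoli construction of $K_{a}$ are standard; one only has to note, as you implicitly do, that the threshold $\epsilon_{0}$ can be chosen uniformly over the countably many scales $(\delta_{k},\eta_{k})$ because the exponent $\eta_{k}^{2}/\delta_{k}$ can be made to grow linearly in $k$. What your route buys is transparency and independence from the Feng--Kurtz machinery; what the paper's citation buys is brevity and a criterion that would also apply in settings where no deterministic Lipschitz bound on the time change is available. One cosmetic remark: your set $K_{a}$ is pinned at $f(0)=Y^{\epsilon}_{0}$, which is legitimate only because $Y^{\epsilon}_{0}=u(x)$ is deterministic and independent of $\epsilon$ under $\mathbb{P}_{x}$; it is cleaner to write $f(0)=u(x)$ so that $K_{a}$ is manifestly a fixed compact set.
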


\begin{rem}
In what follows we will use  Lemmas (\ref{LemmaTimeChange1a}) and (\ref{LemmaTimeChange2}) with $\phi=u_{-1}(\psi)$, where $\psi$ is a continuous function.
\begin{flushright}
$\square$
\end{flushright}
\end{rem}

Let us define now the functional
\begin{eqnarray}
S_{0T}^{Y}(\psi)=\begin{cases}\frac{1}{2}\int_{0}^{\sigma_{u_{-1}(\psi)}(T)}|\frac{d \psi(\gamma_{u_{-1}(\psi)}(s))}{ds}|^{2}ds, & \text{if }  \psi(\gamma_{u_{-1}(\psi)}(s)) \text{is absolutely continuous and } \psi_{0}=u(x)
\\&\\
                                                     +\infty, & \text{for the rest of }\mathcal{C}([0,T];\mathbb{R}). \end{cases}\label{ActionFunctional2}
\end{eqnarray}
Remark \ref{R:DiscontinuityPoints}, Theorems \ref{MainTheoremHelp2} and \ref{MainTheoremHelp1} below imply that the
action functional for the process $\left(Y^{\epsilon}_{t}\right)_{t\in[0,T]}$ on
$\mathcal{C}([0,T];\mathbb{R})$ as $\epsilon\downarrow 0$ is given by
$\frac{1}{\epsilon}S_{0T}^{Y}(\psi)$. Theorem \ref{MainTheoremHelp2} discusses the standard properties of
$S_{0T}^{Y}(\psi)$. In particular,  $S_{0T}^{Y}(\psi)$ is lower semi-continuous in the sense of
uniform convergence and the set $\Psi_{s}=\lbrace \psi
\in \mathcal{C}([0,T];\mathbb{R}): S_{0T}^{Y}(\psi)\leq s \rbrace $ is compact. Theorem \ref{MainTheoremHelp1} is about the
estimates for probabilities of large deviations.  Then, as we mentioned before,
Theorem \ref{MainTheorem} follows from these two theorems, Remark \ref{R:DiscontinuityPoints} and the well known contraction principle for large deviations.

\begin{thm}
Let $u$ and $v$ be two strictly increasing functions as in Theorem
\ref{MainTheorem} and let $S_{0T}^{Y}(\psi)$  be defined by
(\ref{ActionFunctional2}). Then
\begin{enumerate}
 \item  The functional $S_{0T}^{Y}(\psi)$ is lower semi-continuous in the sense of uniform convergence.
   Namely, if a sequence $\psi^{n}$ converges uniformly to $\psi$ in $\mathcal{C}([0,T];\mathbb{R})$, then $S_{0T}^{Y}(\psi)\leq \liminf_{n\rightarrow \infty}S_{0T}^{Y}(\psi^{n})$.
 \item The set 
$\Psi_{s}=\lbrace \psi \in \mathcal{C}([0,T];\mathbb{R}): S_{0T}^{Y}(\psi)\leq s \text{ and } \psi(0) \text{ belongs to a compact subset of } \mathbb{R} \rbrace $ is compact.
 \end{enumerate} \label{MainTheoremHelp2}
 \begin{flushright}
$\square$
\end{flushright}
\end{thm}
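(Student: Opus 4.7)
The plan is to exploit the identity $\psi(t) = f_{\psi}(\sigma_{\phi}(t))$, where $\phi = u_{-1}(\psi)$ and $f_\psi(s) = \psi(\gamma_\phi(s))$, together with the uniform two-sided control
\[
0 \leq \sigma_{\phi}(t_2)-\sigma_{\phi}(t_1) \leq \frac{2c_1}{c_2}(t_2-t_1),
\]
which follows from the structural assumption $0<u'\leq c_1$ and $0<c_2\leq v'$: these force $\frac{dv}{du}\geq c_2/c_1$ at points where both derivatives exist, so the integrand defining $\sigma_\phi$ is bounded above by $2c_1/c_2$. The identity $\psi(t)=f_\psi(\sigma_\phi(t))$ is immediate when $\sigma_\phi$ is strictly increasing at $t$; on a flat interval of $\sigma_\phi$, Lemma \ref{LemmaTimeChange1a} forces $\phi$ (hence $\psi = u\circ\phi$) to be constant on that interval, so the identity persists. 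Note also that $2 S_{0T}^Y(\psi) = \int_{0}^{\sigma_\phi(T)}|f_\psi'|^2\,ds$ whenever the latter is finite.

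For part (ii) I would apply Arzel\`a--Ascoli. Pointwise boundedness of $\Psi_s$ follows from $\psi(0)\in K$ (compact) and Cauchy--Schwarz:
\[
|\psi(t)-\psi(0)| = |f_\psi(\sigma_\phi(t))-f_\psi(0)| \leq \sqrt{\sigma_\phi(T)}\Bigl(\int_{0}^{\sigma_\phi(T)}|f_\psi'|^2\,ds\Bigr)^{1/2} \leq \sqrt{\tfrac{4c_1 T s}{c_2}}.
\]
The same estimate with $0$ replaced by $t_1$ and $t$ by $t_2$ gives uniform H\"older-$\tfrac{1}{2}$ control
\[
|\psi(t_2)-\psi(t_1)| \leq \sqrt{\tfrac{4c_1 s}{c_2}(t_2-t_1)},
\]
so $\Psi_s$ is equicontinuous and pointwise bounded, hence precompact. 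Closedness in $\mathcal{C}([0,T];\mathbb{R})$ follows from part (i), so $\Psi_s$ is compact.

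For part (i), let $\psi^n \to \psi$ uniformly with $M = \liminf_n S_{0T}^Y(\psi^n) < \infty$; passing to a subsequence I may assume the limit equals $M$ and that each $f^n := f_{\psi^n}$ is absolutely continuous. Set $T_n = \sigma_{u_{-1}(\psi^n)}(T)$ and $T_\infty = \sigma_{u_{-1}(\psi)}(T)$; Lemma \ref{LemmaTimeChange2}(i) gives $T_n \to T_\infty$, so choose $\bar T$ with $T_n \leq \bar T$ for all $n$ and extend each $f^n$ on $[T_n,\bar T]$ by its endpoint value. The extended sequence is uniformly bounded in $H^1([0,\bar T])$ (derivative $L^2$-norm bounded by $\sqrt{2M}$; sup-norm bounded by the estimate of the previous paragraph applied to each $\psi^n$), so a subsequence converges weakly in $H^1$ and uniformly to some $g$. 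Lemma \ref{LemmaTimeChange2}(iii) together with continuity of $u$ gives $f^n(s)\to f_\psi(s)$ pointwise on $[0,T_\infty]$, identifying $g$ with $f_\psi$ there. Weak lower semicontinuity of the $L^2$-norm of the derivative then yields
\[
2 S_{0T}^Y(\psi) = \int_{0}^{T_\infty}|f_\psi'|^2\,ds \leq \int_{0}^{\bar T}|g'|^2\,ds \leq \liminf_n \int_{0}^{T_n}|(f^n)'|^2\,ds = 2M.
\]

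The main obstacle is the varying interval of integration $[0,T_n]$ combined with the nonsmooth time change $\gamma_{u_{-1}(\psi)}$ entering the definition of $f_\psi$; this is overcome by the constant-extension trick to a common interval and the identification of the weak $H^1$ limit via the pointwise convergence supplied by Lemma \ref{LemmaTimeChange2}.
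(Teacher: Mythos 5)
Your proposal is correct, and for part (ii) it is essentially the paper's own argument: the same identity $\psi(t)=\psi(\gamma_{u_{-1}(\psi)}(\sigma_{u_{-1}(\psi)}(t)))$ justified through Lemma \ref{LemmaTimeChange1a} on flat intervals of $\sigma$, the same Cauchy--Schwarz bounds giving uniform boundedness and H\"older-$\tfrac12$ equicontinuity (the paper writes the Lipschitz constant of $\sigma_{u_{-1}(\psi)}$ as an unspecified $C_{0}$ where you compute $2c_{1}/c_{2}$ explicitly), and Arzel\`a--Ascoli. For part (i) the paper only says the claim ``follows directly from Lemma \ref{LemmaTimeChange2}'' together with references to Riesz--Nagy and to the Freidlin--Wentzell proof for the Wiener action functional, which proceeds by representing the energy as a supremum of continuous lower-order functionals; you instead supply a complete argument via uniform $H^{1}$ bounds, constant extension of $f^{n}=\psi^{n}\circ\gamma_{u_{-1}(\psi^{n})}$ to a common interval $[0,\bar T]$, weak $H^{1}$ compactness, identification of the weak limit with $f_{\psi}$ through Lemma \ref{LemmaTimeChange2}(iii) and continuity of $u$, and weak lower semicontinuity of the $L^{2}$ norm of the derivative. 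The two routes are interchangeable here; yours has the merit of handling the moving endpoint $T_{n}=\sigma_{u_{-1}(\psi^{n})}(T)$ explicitly (via Lemma \ref{LemmaTimeChange2}(i) and the constant-extension device), which is exactly the point the paper leaves implicit, and it also yields the absolute continuity of $f_{\psi}$ as part of the conclusion rather than assuming it.
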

\begin{proof}
(i). It is sufficient to consider the case when $S_{0T}^{Y}(\psi^{n})$ has a finite limit. The proof follows directly from Lemma \ref{LemmaTimeChange2} and the fact that
$\psi(\gamma_{u_{-1}(\psi)}(s))$ is absolutely continuous (see
\cite{RieszNagy} page 75 and the proof of the corresponding property for the action functional of the Wiener process \cite{FW5}).

(ii). Let $\psi\in \Psi_{s}$, i.e. $S_{0T}^{Y}(\psi)\leq s$. It is
enough to prove that
 \begin{enumerate}[a)]
 \item $|\psi(t)|\leq C_{0}<\infty$ for some constant $C_{0}$ uniformly in  $t\in[0,T]$.
  \item $|\psi(t+h)-\psi(t)|\leq g(h)\rightarrow 0$ as $h\rightarrow 0$ for some function $g(h)$ uniformly in $t\in[0,T]$.
\end{enumerate}
Then we can conclude by the well known Ascoli-Arzela theorem.

We have two cases: $\gamma_{u_{-1}(\psi)}(\cdot)$ is continuous at
$\sigma_{u_{-1}(\psi)}(t)\in[0,\sigma_{u_{-1}(\psi)}(T)]$ and
$\gamma_{u_{-1}(\psi)}(\cdot)$ is not continuous at
$\sigma_{u_{-1}(\psi)}(t)\in[0,\sigma_{u_{-1}(\psi)}(T)]$ for
$t\in[0,T]$.

Let $t\in[0,T]$ be such that $\gamma_{u_{-1}(\psi)}(\cdot)$ is
continuous at $\sigma_{u_{-1}(\psi)}(t)$. In this case we certainly
have $\gamma_{u_{-1}(\psi)}(\sigma_{u_{-1}(\psi)}(t))=t$.  Then, under
the assumptions on the functions $u$ and $v$, we easily see that
\begin{eqnarray}
 |\psi(t)|&\leq& |\psi(t)-\psi(0)|+|\psi(0)|\nonumber\\
 &=& |\int_{0}^{\sigma_{u_{-1}(\psi)}(t)}\frac{d \psi(\gamma_{u_{-1}(\psi)}(s))}{ds}ds|+|\psi(0)|\nonumber\\
&\leq& \sqrt{\sigma_{u_{-1}(\psi)}(t)2S_{0T}^{Y}(\psi)}+|\psi(0)|\nonumber\\
&\leq& \sqrt{C_{0}T}\sqrt{2s}+|\psi(0)|\label{Eq:Compactness}
\end{eqnarray}
and similarly if $t,t+h\in[0,T]$ are such that
$\gamma_{u_{-1}(\psi)}(\sigma_{u_{-1}(\psi)}(t))=t$ and
$\gamma_{u_{-1}(\psi)}(\sigma_{u_{-1}(\psi)}(t+h))=t+h$, then
\begin{eqnarray}
 |\psi(t+h)-\psi(t)|&\leq & \sqrt{2s}\sqrt{\sigma_{u_{-1}(\psi)}(t+h)-\sigma_{u_{-1}(\psi)}(t)}\nonumber\\
&\leq& \sqrt{2s}\sqrt{C_{0}h}.\nonumber
\end{eqnarray}
Let $t\in[0,T]$ be such that $\gamma_{u_{-1}(\psi)}(\cdot)$ is not
continuous at $\sigma_{u_{-1}(\psi)}(t)$.
Since for any $t$ we have $\gamma^{-}_{u_{-1}(\psi)}(\sigma_{u_{-1}(\psi)}(t))\leq t \leq \gamma_{u_{-1}(\psi)}(\sigma_{u_{-1}(\psi)}(t))$, Lemma \ref{LemmaTimeChange1a} implies that $\psi(t)=\psi(\gamma_{u_{-1}(\psi)}(\sigma_{u_{-1}(\psi)}(t)))$. Therefore, we have that the calculations in (\ref{Eq:Compactness}) remain valid in this case as well.
This implies part a). For the equicontinuity part b) we can proceed in a similar way and prove that
\begin{displaymath}
 |\psi(t+h)-\psi(t)|\leq \sqrt{2s}\sqrt{C_{0}h}.
\end{displaymath}
This concludes the proof of the theorem.
\end{proof}

\begin{thm}
Let $u$ and $v$ be two strictly increasing functions as in Theorem \ref{MainTheorem} and let $S_{0T}^{Y}(\psi)$  be defined by (\ref{ActionFunctional2}). Then
\begin{enumerate}
 \item For any continuous $\psi:[0,T]\rightarrow \mathbb{R}$ and any $\delta,\eta>0$ there exists an $\epsilon_{0}>0$ such that
  \begin{equation}
    \mathbb{P}_{x}(\sup_{0\leq t \leq T}|Y^{\epsilon}_{t}-\psi(t)|<\delta)\geq \exp \lbrace -\frac{1}{\epsilon}(S_{0T}^{Y}(\psi)+\eta) \rbrace\label{ActionFunctionalStatement1}
  \end{equation}
 for $0<\epsilon<\epsilon_{0}$.
 \item Let $s\in (0,\infty)$ and $\Psi_{s}=\lbrace \psi \in \mathcal{C}([0,T];\mathbb{R}): S_{0T}^{Y}(\psi)\leq s \rbrace $. For any $\delta,h>0$ there exists an  $\epsilon_{0}>0$ such that
      \begin{equation}
  \mathbb{P}_{x}(\rho_{0T} (Y^{\epsilon}_{\cdot}, \Psi_{s})>\delta)\leq \exp \lbrace -\frac{1}{\epsilon}(s-\eta) \rbrace\label{ActionFunctionalStatement2}
 \end{equation}
  for $0<\epsilon<\epsilon_{0}$. Here, $\rho_{0T}(\cdot,\cdot)$ is the uniform metric in $\mathcal{C}([0,T];\mathbb{R})$.

\end{enumerate} \label{MainTheoremHelp1}
\begin{flushright}
$\square$
\end{flushright}
\end{thm}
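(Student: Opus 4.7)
The strategy is to realize $Y^{\epsilon}$ as a deterministic continuous image of $\sqrt{\epsilon}W$ and then transfer Schilder's classical large deviations principle through this map. From Proposition \ref{Proposition21} and the hypotheses $v'\geq c_{2}$, $u'\leq c_{1}$, which yield $\frac{1}{2}\frac{dv}{du}\geq c_{2}/(2c_{1})$, the random time change $\tau^{\epsilon}:=\tau_{u_{-1}(\sqrt{\epsilon}W)}$ satisfies $\tau^{\epsilon}(T)\leq \bar{T}:=2c_{1}T/c_{2}$ with probability one. Hence $Y^{\epsilon}|_{[0,T]}$ is a deterministic functional $\mathcal{F}$ of $\sqrt{\epsilon}W|_{[0,\bar{T}]}$, namely $\mathcal{F}(\zeta)(t)=\zeta(\tau^{\zeta}(t))$ with $\tau^{\zeta}$ defined from $\zeta$ by the analogue of (\ref{RandomChangeTimeWithN_1}), and Schilder's theorem is available on the fixed bounded interval $[0,\bar{T}]$.

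For the lower bound in part (i), I may assume $S_{0T}^{Y}(\psi)<\infty$, for otherwise the inequality is vacuous. I construct a candidate Brownian trajectory $\zeta\in\mathcal{C}([0,\bar{T}];\mathbb{R})$ starting at $u(x)$ by
\[
\zeta(s)=\psi(\gamma_{u_{-1}(\psi)}(s))\text{ for }s\in[0,\sigma_{u_{-1}(\psi)}(T)],\qquad \zeta(s)=\psi(T)\text{ otherwise}.
\]
A change of variables driven by $\sigma_{u_{-1}(\psi)}$ shows that $\zeta$ is absolutely continuous with $\frac{1}{2}\int_{0}^{\bar{T}}|\dot\zeta(s)|^{2}\,ds=S_{0T}^{Y}(\psi)$ and that $\mathcal{F}(\zeta)=\psi$. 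Schilder's lower bound then yields $\mathbb{P}_{x}(\rho_{0\bar{T}}(\sqrt{\epsilon}W,\zeta)<\delta')\geq \exp\{-\frac{1}{\epsilon}(S_{0T}^{Y}(\psi)+\eta/2)\}$ for $\epsilon$ small. It remains to check that the event in question forces $\rho_{0T}(Y^{\epsilon},\psi)<\delta$ once $\delta'$ is chosen small enough, which is precisely the continuity of $\mathcal{F}$ at $\zeta$ in the uniform topology. This continuity follows from Lemmas \ref{LemmaTimeChange1a} and \ref{LemmaTimeChange2} applied with $\phi=u_{-1}(\psi)$: Lemma \ref{LemmaTimeChange2} delivers uniform convergence of the relevant time-change functionals, and Lemma \ref{LemmaTimeChange1a} controls the constant stretches of $\gamma_{u_{-1}(\psi)}$ produced by jumps of $v$.

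For the upper bound in part (ii), I would combine Schilder's upper bound on $\mathcal{C}([0,\bar{T}];\mathbb{R})$ with the exponential tightness of Lemma \ref{ExponentiallyTight}. Schilder gives that $\sqrt{\epsilon}W$ lies within uniform distance $\delta''$ of the compact Brownian level set $K^{W}_{s}:=\{\zeta\in\mathcal{C}([0,\bar{T}];\mathbb{R}):\frac{1}{2}\int_{0}^{\bar{T}}|\dot\zeta|^{2}\,ds\leq s,\ \zeta(0)=u(x)\}$ except on an event of probability at most $\exp\{-(s-\eta/2)/\epsilon\}$. The change-of-variable identity $S_{0T}^{Y}(\mathcal{F}(\zeta))\leq \frac{1}{2}\int_{0}^{\bar{T}}|\dot\zeta|^{2}\,ds$ (again from the computation used in step (i)) gives $\mathcal{F}(K^{W}_{s})\subset \Psi_{s}$, and the continuity of $\mathcal{F}$, now uniform on $K^{W}_{s}$ by the compactness supplied in Theorem \ref{MainTheoremHelp2} and by Lemma \ref{LemmaTimeChange2}, transports $\rho_{0\bar{T}}(\sqrt{\epsilon}W,K^{W}_{s})<\delta''$ into $\rho_{0T}(Y^{\epsilon},\Psi_{s})<\delta$. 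Lemma \ref{ExponentiallyTight} is invoked only to localize $Y^{\epsilon}$ to a compact subset of $\mathcal{C}([0,T];\mathbb{R})$ at negligible probabilistic cost, making this transfer safe.

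The principal technical obstacle is exactly this continuity of $\mathcal{F}$ near paths $\zeta$ whose image $\psi=\mathcal{F}(\zeta)$ meets the countable exceptional set $U\cup V\cup V_{d}$ on a set of positive Lebesgue time: the integrand $\frac{1}{2}\frac{dv}{du}\circ u_{-1}$ defining $\tau^{\zeta}$ is genuinely discontinuous along the image of this set, and $\gamma_{u_{-1}(\psi)}$ itself may be discontinuous when $\psi$ dwells at a jump point of $v$. Lemmas \ref{LemmaTimeChange1a} and \ref{LemmaTimeChange2} are tailored precisely to this difficulty: Lemma \ref{LemmaTimeChange1a} ensures that $\psi$ is constant across every jump of $\gamma_{u_{-1}(\psi)}$, so that the composed value $\psi(\gamma_{u_{-1}(\psi)}(s))$ is unambiguously determined, while Lemma \ref{LemmaTimeChange2} provides the uniform approximation of $\sigma$, $\gamma$ and $\psi\circ\gamma$ that drives the continuity of $\mathcal{F}$.
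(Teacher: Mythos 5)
Your overall architecture --- write $Y^{\epsilon}$ as the image of $\sqrt{\epsilon}W$ under a map built from the time change, identify the candidate Brownian path $\zeta=\psi\circ\gamma_{u_{-1}(\psi)}$ whose Schilder rate equals $S_{0T}^{Y}(\psi)$, and transfer Schilder's bounds --- matches the paper's in spirit, and the rate identification via the change of variables $s=\sigma_{u_{-1}(\psi)}(r)$ is exactly what the paper uses. But there is a genuine gap at the foundation: you treat $Y^{\epsilon}=\mathcal{F}(\sqrt{\epsilon}W)$ with $\mathcal{F}$ a \emph{deterministic, continuous} functional and invoke the contraction principle. The time change $\tau_{u_{-1}(\sqrt{\epsilon}W)}(t)$ is defined in Theorem \ref{VolkonskiiTheorem1} only as the limit in probability of the approximations $\tau^{n}_{u_{-1}(\sqrt{\epsilon}W)}(t)$; ``the analogue of (\ref{RandomChangeTimeWithN_1})'' with $v$ in place of $v_{n}$ does not reproduce it, because the pointwise derivative $\frac{dv}{du}$ ignores the atoms of $dv$ at the jump points of $v$, whereas the limit of the $v_{n}$-approximations picks up a local-time contribution there (this is precisely what makes the limiting process sticky, even though the time the Brownian path spends at such points has Lebesgue measure zero). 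As a functional of the path in the uniform topology this limit is not continuous --- local time is not --- so $\mathcal{F}$ is neither unambiguously defined path-by-path nor continuous, and Lemmas \ref{LemmaTimeChange1a} and \ref{LemmaTimeChange2} cannot rescue this: they concern the deterministic objects $\sigma_{\phi},\gamma_{\phi}$ attached to the \emph{candidate} path $\psi$, not the Volkonskii time change of the driving Brownian path. The same defect undermines part (ii), where both the inclusion $\mathcal{F}(K^{W}_{s})\subset\Psi_{s}$ and the transport of neighborhoods rely on (uniform) continuity of $\mathcal{F}$ near $K^{W}_{s}$.

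The paper avoids asserting any such continuity. It writes $\tau_{u_{-1}(\sqrt{\epsilon}W)}(t)=\sigma_{u_{-1}(\psi)}(t)+[\tau^{n}-\sigma^{n}_{u_{-1}(\psi)}]+[\tau-\tau^{n}]+[\sigma^{n}_{u_{-1}(\psi)}-\sigma_{u_{-1}(\psi)}]$ and controls the three error terms by different means: the term $\tau-\tau^{n}$ is controlled only \emph{probabilistically}, by the superexponential estimate (\ref{ActionFunctionalStatement1Step2}); the term $\tau^{n}-\sigma^{n}_{u_{-1}(\psi)}$ is controlled deterministically, but only \emph{on the event} $\{\sup_{t}|Y^{\epsilon}_{t}-\psi(t)|<\delta/N\}$, via the identity $\tau^{n}_{u_{-1}(\sqrt{\epsilon}W)}(t)=\sigma^{n}_{u_{-1}(Y^{\epsilon})}(t)$ and the continuity of $dv_{n}/du$ for fixed $n$ (step (\ref{ActionFunctionalStatement1Step3})); the last term is Lemma \ref{LemmaTimeChange1}. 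The resulting time perturbation is then absorbed by the modulus-of-continuity estimate (\ref{ActionFunctionalStatement1Step5}) for $W$, after which Schilder's bounds are applied to $\psi(\gamma_{u_{-1}(\psi)}(\cdot))$ on $[0,\sigma_{u_{-1}(\psi)}(T)]$; part (ii) additionally uses exponential tightness and a finite $\delta$-net of $K_{2s+1}\setminus\Psi_{s}$ rather than a global continuous-mapping argument. A further point you do not address is the case $\Lambda(E_{u_{-1}(\psi)})>0$, where $\sigma_{u_{-1}(\psi)}$ itself depends on the approximating sequence $(v_{n})$; the paper devotes the final part of its proof to showing the action functional is nevertheless unaffected. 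To repair your argument you would need to replace ``continuity of $\mathcal{F}$'' by an exponential-approximation statement of exactly the kind the paper's estimates (\ref{ActionFunctionalStatement1Step2})--(\ref{ActionFunctionalStatement1Step5}) provide.
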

\begin{proof} Both statements are trivially true if $\psi$ is such
that $S_{0T}^{Y}(\psi)=\infty$. So, we assume that $\psi$ is such
that $S_{0T}^{Y}(\psi)<\infty$.

Throughout the proof of this Theorem we work with a sequence of functions $\left(v_{n}(x)\right)_{n\in N}$ as in the statement of Lemma \ref{LemmaTimeChange1}. Lemma \ref{LemmaTimeChange1} guarantees  that for $\psi$ such that $\Lambda(E_{u_{-1}(\psi)})=0$ relation (\ref{ChangeTimeWithoutN_11}) holds with $\phi=u_{-1}(\psi)$. If the function $\psi$ is such that  $\Lambda(E_{u_{-1}(\psi)})>0$, then we consider a sequence
 $\left(v_{n}(x)\right)_{n\in N}$ such that, in addition to the previous requirements, relation (\ref{ChangeTimeWithoutN_11}) still holds (with the interpretation of $\sigma_{u_{-1}(\psi)}(t)$ given in the statement of Theorem \ref{MainTheorem}). We claim that this restriction can be done without loss of generality. We leave the proof of this claim for the end and we continue with the proof of the Theorem.

(i). Let $n,N>1$ be positive integers that will be chosen appropriately later on and recall the definition of the sequences  $\left(\tau^{n}\right)_{n\in\mathbb{N}}$ and $\left(\sigma^{n}\right)_{n\in\mathbb{N}}$ by (\ref{RandomChangeTimeWithN_1}) and
 (\ref{ChangeTimeWithN_1}) respectively. We have
\begin{eqnarray}
& &\mathbb{P}_{x}(\sup_{0\leq t \leq T}|Y^{\epsilon}_{t}-\psi(t)|<\delta)\geq\nonumber\\
&\geq& \mathbb{P}_{x}(\sup_{0\leq t \leq T}|Y^{\epsilon}_{t}-\psi(t)|<\delta/N,\label{ActionFunctionalStatement1Step1}\\
& &\hspace{0.5cm} \sup_{0\leq t \leq T}|\sqrt{\epsilon}W\left(\sigma_{u_{-1}(\psi)}(t)+[\tau^{n}_{u_{-1}(\sqrt{\epsilon}W)}(t)-\sigma^{n}_{u_{-1}(\psi)}(t)]+\right.\nonumber\\
& &\hspace{0.5cm}\left. +[\tau_{u_{-1}(\sqrt{\epsilon}W)}(t)-\tau^{n}_{u_{-1}(\sqrt{\epsilon}W)}(t)]+
[\sigma^{n}_{u_{-1}(\psi)}(t)-\sigma_{u_{-1}(\psi)}(t)]\right)-\psi(t)|<\delta)\nonumber
\end{eqnarray}
Note that the notation $W_{t}$ and $W(t)$ are used equivalently.

Now by statement (i) of Theorem \ref{VolkonskiiTheorem1} we know that for every $\delta>0$ and $\epsilon>0$ and for $n$ large enough, the following statement holds
\begin{equation}
\mathbb{P}_{x}(\sup_{0\leq t \leq T}|\tau^{n}_{u_{-1}(\sqrt{\epsilon}W)}(t)-\tau_{u_{-1}(\sqrt{\epsilon}W)}(t)|>\frac{\delta}{4})\leq \exp \lbrace -\frac{2}{\epsilon}S_{0T}^{Y}(\psi) \rbrace\label{ActionFunctionalStatement1Step2}
\end{equation}
Moreover, the continuity of the function $\frac{dv_{n}}{du}$ and the fact that $$\tau^{n}_{u_{-1}(\sqrt{\epsilon}W)}(t)=\sigma^{n}_{u_{-1}(Y^{\epsilon})}(t)$$ imply that for any $\delta_{1}>0$
\begin{equation}
\sup_{0\leq t \leq T}|\tau^{n}_{u_{-1}(\sqrt{\epsilon}W)}(t)-\sigma^{n}_{u_{-1}(\psi)}(t)|<\delta_{1}/2\label{ActionFunctionalStatement1Step3}
\end{equation}
for trajectories $Y^{\epsilon}_{t}, 0\leq t\leq T$, such that  $\sup_{0\leq t \leq T}|Y^{\epsilon}_{t}-\psi(t)|<\delta/N$ with a large enough $N$ that is independent of $n$.

By  the choice of the approximating sequence $\left(v_{n}(x)\right)_{n\in N}$ and Lemma \ref{LemmaTimeChange1} we also have that
\begin{equation}
\sup_{0\leq t \leq T}|\sigma^{n}_{u_{-1}(\psi)}(t)-\sigma_{u_{-1}(\psi)}(t)|<\delta_{1}/2\label{ActionFunctionalStatement1Step4}
\end{equation}
for $n$ large enough.

Furthermore, for a one dimensional Wiener process $W_{t}$ we have
\begin{eqnarray}
& &\mathbb{P}_{x}\left(\sqrt{\epsilon}\max_{0\leq t\leq T}\max_{|s|\leq \delta_{1},t+s\geq 0} |W_{t+s}-W_{s}|>\frac{\delta}{4}\right)\nonumber\\
& & \hspace{1cm}\leq \sum_{k=1}^{\left[\frac{T}{\delta_{1}}\right]+1}\mathbb{P}_{x}\left(\sqrt{\epsilon} \max_{0\leq s\leq 2\delta_{1}}\left|W_{\frac{kT}{\delta_{1}}+s}-W_{\frac{kT}{\delta_{1}}}\right|>\frac{\delta}{4}\right)\nonumber\\
& & \hspace{1cm}\leq \left(\frac{T}{\delta_{1}}+1\right)\mathbb{P}_{x}\left( \sqrt{\epsilon}\max_{0\leq s\leq 2\delta_{1}}\left|W_{s}\right|>\frac{\delta}{4}\right)\nonumber\\
& & \hspace{1cm}\leq \frac{T+1}{\delta_{1}} \exp \lbrace-\frac{\delta^{2}}{4\epsilon\delta_{1}}\rbrace\nonumber\\
& & \hspace{1cm}\leq \exp \lbrace -\frac{2}{\epsilon}S_{0T}^{Y}(\psi) \rbrace\label{ActionFunctionalStatement1Step5}
\end{eqnarray}
for $\delta_{1}=\delta/10 S_{0T}^{Y}(\psi)$ and $\epsilon>0$ small enough.
%The proof of the latter is standard (see for example relation (2.11) in \cite{F3}).

Combining now relations (\ref{ActionFunctionalStatement1Step1})-(\ref{ActionFunctionalStatement1Step5}) and Lemma \ref{LemmaTimeChange1a} we get
\begin{eqnarray}
& &\mathbb{P}_{x}(\sup_{0\leq t \leq T}|Y^{\epsilon}_{t}-\psi(t)|<\delta)\geq\nonumber\\
&\geq& \mathbb{P}_{x}( \sup_{0\leq t \leq T}|\sqrt{\epsilon}W(\sigma_{u_{-1}(\psi)}(t))-\psi(t)|<\frac{\delta}{4})-3\exp \lbrace -\frac{2}{\epsilon}S_{0T}^{Y}(\psi) \rbrace\nonumber\\
&=&\mathbb{P}_{x}( \sup_{0\leq t \leq \sigma_{u_{-1}(\psi)}(T)}|\sqrt{\epsilon}W(t)-\psi(\gamma_{u_{-1}(\psi)}(t))|<\frac{\delta}{4})-3\exp \lbrace -\frac{2}{\epsilon}S_{0T}^{Y}(\psi) \rbrace\nonumber\\
&\geq& C_{0}\exp \lbrace -\frac{1}{\epsilon}(S_{0T}^{Y}(\psi)+\eta) \rbrace
\end{eqnarray}
for $\epsilon$ small enough. In the last inequality we used the well known formula for the action functional of the Gaussian process $\sqrt{\epsilon}W(t)$ on the function $\psi(\gamma_{u_{-1}(\psi)}(t))$ for $0\leq t\leq \sigma_{u_{-1}(\psi)}(T)$.

\vspace{0.2cm}

(ii). By Lemma \ref{ExponentiallyTight} we know that $Y^{\epsilon}_{t}$ is exponential tight. Hence for $\alpha=2s+1$ we have
\begin{displaymath}
\mathbb{P}(\rho_{0T}(Y^{\epsilon}_{\cdot}, K_{2s+1})\geq \delta)<\exp \lbrace -\frac{2s+1}{\epsilon}\rbrace
\end{displaymath}
We have
\begin{eqnarray}
& &\mathbb{P}_{x}(\rho_{0T} (Y^{\epsilon}_{\cdot},\Psi_{s})>\delta)=\mathbb{P}_{x}(\rho_{0T} (Y^{\epsilon}_{\cdot}, \Psi_{s})>\delta, \rho_{0T}(Y^{\epsilon}_{\cdot}, K_{2s+1})< \delta)+\nonumber\\
& &\hspace{3.2cm}+\mathbb{P}_{x}(\rho_{0T} (Y^{\epsilon}_{\cdot}, \Psi_{s})>\delta, \rho_{0T}(Y^{\epsilon}_{\cdot}, K_{2s+1})> \delta) \leq\nonumber\\
&\leq & \mathbb{P}_{x}(\rho_{0T} (Y^{\epsilon}_{s},K_{2s+1}\setminus \Psi_{s})<\delta)+\exp \lbrace -\frac{2s+1}{\epsilon}\rbrace\label{ActionFunctionalStatement2Step1}
\end{eqnarray}
Let now $\psi\in K_{2s+1}\setminus \Psi_{s}$.  Recall that $Y^{\epsilon}_{t}=\sqrt{\epsilon}W\left(\tau_{u_{-1}(\sqrt{\epsilon}W)}(t)\right)$. Hence,  we have
\begin{eqnarray}
& &\mathbb{P}_{x}(\sup_{0\leq t \leq T}|Y^{\epsilon}_{t}-\psi(t)|<2 \delta)=\nonumber\\
&=& \mathbb{P}_{x}(\sup_{0\leq t \leq T}|Y^{\epsilon}_{t}-\psi(t)|<2 \delta,\label{ActionFunctionalStatement2Step2}\\
& &\hspace{0.5cm} \sup_{0\leq t \leq T}|\sqrt{\epsilon}W\left(\sigma_{u_{-1}(\psi)}(t)+[\tau^{n}_{u_{-1}(\sqrt{\epsilon}W)}(t)-\sigma^{n}_{u_{-1}(\psi)}(t)]+\right.\nonumber\\
& &\hspace{0.5cm}\left. +[\tau_{u_{-1}(\sqrt{\epsilon}W)}(t)-\tau^{n}_{u_{-1}(\sqrt{\epsilon}W)}(t)]+
[\sigma^{n}_{u_{-1}(\psi)}(t)-\sigma_{u_{-1}(\psi)}(t)]\right)-\psi(t)|<2 \delta)\nonumber
\end{eqnarray}
Using (\ref{ActionFunctionalStatement1Step2})-(\ref{ActionFunctionalStatement1Step5}) and Lemma \ref{LemmaTimeChange1a}, the latter implies that for $n$ large enough and $\delta$ small enough we have
\begin{eqnarray}
& &\mathbb{P}_{x}(\sup_{0\leq t \leq T}|Y^{\epsilon}_{t}-\psi(t)|<2 \delta)\leq\nonumber\\
&\leq& \mathbb{P}_{x}( \sup_{0\leq t \leq T}|\sqrt{\epsilon}W(\sigma_{u_{-1}(\psi)}(t))-\psi(t)|<8\delta)+3\exp \lbrace -\frac{2s+1}{\epsilon} \rbrace\nonumber\\
&=&\mathbb{P}_{x}( \sup_{0\leq t \leq \sigma_{u_{-1}(\psi)}(T)}|\sqrt{\epsilon}W(t)-\psi(\gamma_{u_{-1}(\psi)}(t))|<8\delta)+3\exp \lbrace -\frac{2s+1}{\epsilon} \rbrace\nonumber\\
&\leq& \exp \lbrace -\frac{1}{\epsilon}(S_{0T}^{Y}(\psi)-\eta) \rbrace +3\exp \lbrace -\frac{2s+1}{\epsilon} \rbrace\nonumber\\
&\leq& C_{0}\exp \lbrace -\frac{1}{\epsilon}(s-\eta) \rbrace  \rbrace
\label{ActionFunctionalStatement2Step3}
\end{eqnarray}
for $\epsilon$ small enough. In the last inequality we used the well known formula for the action functional of the Gaussian process $\sqrt{\epsilon}W(t)$ on the function $\psi(\gamma_{u_{-1}(\psi)}(t))$ for $0\leq t\leq \sigma_{u_{-1}(\psi)}(T)$ and that for $\psi\in K_{2s+1}\setminus \Psi_{s}$ we have $S_{0T}^{Y}(\psi)\geq s$.

Let now $\psi^{i}$ for $i\in\lbrace 1,\cdots, N\rbrace$ be a finite $\delta$-net of $K_{2s+1}\setminus \Psi_{s}$. Then (\ref{ActionFunctionalStatement2Step1}) and (\ref{ActionFunctionalStatement2Step3}) imply that
\begin{equation}
\mathbb{P}_{x}(\rho_{0T}(Y^{\epsilon}_{s},\Psi_{s})>\delta)\leq C_{0}\exp \lbrace -\frac{1}{\epsilon}(s-\eta) \rbrace  \rbrace \label{ActionFunctionalStatement2Step4}
\end{equation}
for $\epsilon$ small enough. This concludes the proof of part (ii) of the Theorem.

It remains to prove the claim made in the beginning of the proof. Let us pick a point $z\in \left(U \cup V\right)\setminus V_{d}$ and let us write for notational convenience $\phi=u_{-1}(\psi)$. Denote $E^{z}_{\phi}=\{t\in[0,T]: \phi_{t}=z\}$. Essentially, we have two cases
\begin{enumerate}
\item Assume that $E^{z}_{\phi}$ is an interval, for example $E^{z}_{\phi}=[t_{0},t_{1}]\subset [0,T]$. We will have that $\dot{\phi}_{t}=0$ for every $t\in(t_{0},t_{1})$. Then, it is easy to see that $S_{E^{z}_{\phi}}(\phi)=S_{t_{0}t_{1}}(\phi)=0$ (e.g., from expression (\ref{ActionFunctionalreduced})).
\item Assume that $E^{z}_{\phi}$ is not an interval. Then, one can use Theorem A.6.3 in \cite{DupuisEllis} or Problem $11$ on pages $334-335$ of \cite{EithierKurtz} to claim that the Lebesgue measure of the set $\{t\in[0,T]: \phi_{t}=z, \dot{\phi}_{t}\neq 0\}$ is zero (due to absolute continuity).
\end{enumerate}
Hence, in either case we have that $S_{E^{z}_{\phi}}(\phi)=0$. In other words, even though, for the case $\Lambda (E_{\phi})>0$, the limit of $\sigma^{n}_{\phi}(t)$ as $n\rightarrow\infty$ is affected by the approximating sequence $\left(v_{n}(x)\right)_{n\in N}$, the corresponding action functional is not. Thus, we can make the convention that was made in part (i) of Theorem \ref{MainTheorem}.

\end{proof}
We conclude this section with the proof of Theorem \ref{MainTheorem}.
\begin{proof}[Proof of Theorem \ref{MainTheorem}]
Lemma
\ref{LemmaTimeChange1} is essentially statement (i) of Theorem
\ref{MainTheorem}.

As far as statement (ii) of Theorem
\ref{MainTheorem} is concerned, we have the following.
By Remark \ref{R:DiscontinuityPoints} and Theorems \ref{MainTheoremHelp2} and \ref{MainTheoremHelp1} we have that
$\frac{1}{\epsilon}S_{0T}^{Y}(\psi)$ is the action functional for the process $\left(Y^{\epsilon}_{t}\right)_{t\in[0,T]}$ on $\mathcal{C}([0,T];\mathbb{R})$ as $\epsilon\downarrow 0$. Then by the contraction principle we have that the action functional for the process $\left(X^{\epsilon}_{t}\right)_{t\in[0,T]}$ on $\mathcal{C}([0,T];\mathbb{R})$ as $\epsilon\downarrow 0$ is given by $\frac{1}{\epsilon}S_{0T}(\phi)$, where
\begin{eqnarray}
 S_{0T}(\phi)&=&\inf \lbrace S_{0T}^{Y}(\psi): \psi=u(\phi)\rbrace\nonumber\\
&=&S_{0T}^{Y}(u(\phi))\nonumber
\end{eqnarray}
The compactness of the set $\Phi_{s}=\lbrace \phi \in
\mathcal{C}([0,T];\mathbb{R}): S_{0T}(\phi)\leq s \rbrace $ and the lower
semicontinuity of $S_{0T}(\phi)$ follows immediately from the
corresponding statements for $\Psi_{s}$ and $S_{0T}^{Y}(\psi)$.
\end{proof}

\section{Proof of auxiliary results}
In this section we prove Lemma \ref{LemmaTimeChange1},  Lemma \ref{LemmaTimeChange1a}, Lemma \ref{LemmaTimeChange2}, Proposition \ref{Proposition21} and Lemma \ref{ExponentiallyTight}.

\begin{proof}[Proof of Lemma \ref{LemmaTimeChange1}]
A lemma similar to this one is stated without proof in \cite{Volkonskii2}. Here, we provide for completeness a sketch of the proof for our case of interest.

Let $\phi:[0,T]\rightarrow \mathbb{R}$ be a continuous function in $[0,T]$, i.e. $\phi \in \mathcal{C}([0,T];\mathbb{R})$.
Recall that the functions $\sigma^{n}_{\phi}(t)$ are defined by the formula
\begin{displaymath}
\sigma^{n}_{\phi}(t)=\int_{0}^{t}[\frac{1}{2}\frac{dv_{n}}{du}(\phi_{s})]^{-1}ds.
\end{displaymath}

It is easy to see now, that it is enough to prove that
$\lim_{n\rightarrow \infty}\sigma^{n}_{\phi}(t)$ exists for any
$t\in[0,T]$ independently of the choice of the sequence $\left(v_{n}\right)_{n\in N}$.
Then, uniformity follows from the latter and the fact that the first
derivatives of the functions $\sigma^{n}_{\phi}(t)$ are bounded
uniformly in $n$ and $t\in[0,T]$. The assumptions on the functions
$u$ and $v_{n}$ guarantee the boundedness of the first derivatives
of $\sigma^{n}_{\phi}(t)$.

It is clear that $\lim_{n\rightarrow \infty}\sigma^{n}_{\phi}(t)$
exists, independently of the choice of the sequence $\left(v_{n}\right)_{n\in N}$, if the
Lebesgue measure of $V_{d,\phi}$ is zero, i.e.  $\Lambda (
V_{d,\phi})=0$. In this case, the $\lim_{n\rightarrow
\infty}\sigma^{n}_{\phi}(t)$ is continuous and strictly increasing
function of $t$.

Hence, it remains to consider the case $\Lambda (
V_{d,\phi})>0$. It is enough to prove that for any $\epsilon>0$ there is a $n_{0}(\epsilon)>0$ such that
\begin{displaymath}
|\int_{V_{d,\phi}}[\frac{1}{2}\frac{dv_{n}}{du}(\phi_{s})]^{-1}-[\frac{1}{2}\frac{dv_{m}}{du}(\phi_{s})]^{-1}ds|<\epsilon \textrm{ for any } n,m\geq n_{0}(\epsilon).%\label{LemmaTimeChange1Step2}
\end{displaymath}
We write
\begin{eqnarray}
& &|\int_{V_{d,\phi}}[\frac{1}{2}\frac{dv_{n}}{du}(\phi_{s})]^{-1}-[\frac{1}{2}\frac{dv_{m}}{du}(\phi_{s})]^{-1}ds|
\leq\nonumber\\
&\leq& |\int_{V_{d,\phi}\setminus U_{\phi}}[\frac{1}{2}\frac{dv_{n}}{du}(\phi_{s})]^{-1}-[\frac{1}{2}\frac{dv_{m}}{du}(\phi_{s})]^{-1}ds|+\nonumber\\
&+&|\int_{V_{d,\phi}\bigcap U_{\phi}}[\frac{1}{2}\frac{dv_{n}}{du}(\phi_{s})]^{-1}-[\frac{1}{2}\frac{dv_{m}}{du}(\phi_{s})]^{-1}ds|\nonumber
\end{eqnarray}
If $\Lambda (V_{d,\phi}\bigcap  U_{\phi})=0$, then the second term in the inequality above is zero and it is easily seen that the first term can be made arbitrarily small for $n,m$ large enough.

If, on the other hand, $\Lambda ( U_{\phi}\bigcap V_{d,\phi})>0$,
then we may define
$$\lim_{n\rightarrow \infty}
\int_{U_{\phi}\bigcap
V_{d,\phi}}[\frac{1}{2}\frac{dv_{n}}{du}(\phi_{s})]^{-1}ds=0$$
and the result follows. Therefore, in the case $\Lambda(E_{\phi})=0$, the  $\lim_{n\rightarrow \infty}\sigma^{n}_{\phi}(t)$ exists and
the limit is independent of the approximating sequence $\left(v_{n}\right)_{n\in N}$. Finally, it is easily seen that the limit is non decreasing and continuous in $t$.
\end{proof}

\vspace{0.2cm}

\begin{proof}[Proof of Lemma \ref{LemmaTimeChange1a}]
It is clear that $\gamma_{\phi}(t)$ is right continuous. Moreover, it is easy to see that
continuity of $\sigma_{\phi}(\cdot)$ implies that $\sigma_{\phi}(\gamma_{\phi}^{-}(t))=\sigma_{\phi}(\gamma_{\phi}(t))$. This implies that $\phi(s)\in V_{d}$ almost everywhere in $s\in[\gamma_{\phi}^{-}(t), \gamma_{\phi}(t)]$. Recall that $V_{d}$ is the set of discontinuity points for function $v(x)$.

Let now $x_{0}\in V_{d}$ such that $\phi(\gamma_{\phi}^{-}(t))=x_{0}$. Define
\begin{displaymath}
s_{o}=\sup\lbrace s:s\in[\gamma_{\phi}^{-}(t), \gamma_{\phi}(t)], \phi(s)=x_{0},
 \phi(\rho)\notin V_{d}\setminus \lbrace x_{0}\rbrace \textrm{ for all } \rho< s\rbrace
\end{displaymath}
If $s_{o}=\gamma_{\phi}(t)$ then $\phi(s)$ is constant almost everywhere in $s\in[\gamma_{\phi}^{-}(t), \gamma_{\phi}(t)]$. Therefore,  $\phi(s)$ is constant everywhere in $s\in[\gamma_{\phi}^{-}(t), \gamma_{\phi}(t)]$ since $\phi(s)$ is continuous.

Assume that there is some $x_{1}\in V_{d}$ with $x_{1}\neq x_{0}$ such that $\phi(s)=x_{1}$ for some $s\in[\gamma_{\phi}^{-}(t), \gamma_{\phi}(t)]$. In particular, define
\begin{displaymath}
s_{1}=\inf\lbrace s:s\in(s_{0}, \gamma_{\phi}(t)], \phi(s)\in V_{d}\setminus\lbrace x_{0}\rbrace\rbrace.
\end{displaymath}
We write $\phi(s_{1})=x_{1}$. Of course, if $s_{0}=s_{1}$ then we have a contradiction since $\phi(s_{0})=x_{0}$ and $\phi(s_{1})=x_{1}$. So, we assume that $s_{0}<s_{1}$. In this case we clearly have that $\sigma_{\phi}(s_{0})<\sigma_{\phi}(s_{1})$. However, since $[s_{0},s_{1}]\subset [\gamma_{\phi}^{-}(t), \gamma_{\phi}(t)]$ and $\sigma_{\phi}(\cdot)$ is non decreasing and continuous, the latter clearly contradicts $\sigma_{\phi}(\gamma_{\phi}^{-}(t))=\sigma_{\phi}(\gamma_{\phi}(t))$. Hence, such an $x_{1}$ does not exist. The latter implies that $\phi(s)$ is constant almost everywhere in $s\in[\gamma_{\phi}^{-}(t), \gamma_{\phi}(t)]$. Therefore,  $\phi(s)$ is constant everywhere in $s\in[\gamma_{\phi}^{-}(t), \gamma_{\phi}(t)]$ since $\phi(s)$ is continuous.

\end{proof}

\vspace{0.2cm}

\begin{proof}[Proof of Lemma \ref{LemmaTimeChange2}]
Let $\phi^{n}$ be a sequence of functions in $\mathcal{C}([0,T];\mathbb{R})$ that converges to $\phi$ uniformly in $\mathcal{C}([0,T];\mathbb{R})$. We only prove parts (ii) and (iii). Part (i) is easily seen to hold by the uniform convergence of $\phi^{n}$ to $\phi$.

Let $t_{*}\in [0,\sigma_{\phi}(T)]$ be a continuity point of $\gamma_{\phi}(t)$. Of course, $\gamma_{\phi}(t)$ can only have countable many points of discontinuity.

Let $s_{*}\in[0,T]$ be such that $t_{*}=\sigma_{\phi}(s_{*})$. Such an $s_{*}$ exists because $\sigma_{\phi}(s)$ is continuous. By part (i) we have that for any $\epsilon>0$ there is an $n_{0}(\epsilon)\in\mathbb{N}$ such that
\begin{displaymath}
|\sigma_{\phi^{n}}(s)-\sigma_{\phi}(s)|<\epsilon
\end{displaymath}
for every $s\in[0,T]$ and $n\geq n_{0}(\epsilon)$.

The latter and the fact that $\gamma_{\phi^{n}}(t)$ is non-decreasing give us
\begin{displaymath}
\gamma_{\phi^{n}}(\sigma_{\phi^{n}}(s_{*})-\epsilon)\leq \gamma_{\phi^{n}}(\sigma_{\phi}(s_{*})) \leq \gamma_{\phi^{n}}(\sigma_{\phi^{n}}(s_{*})+\epsilon)
\end{displaymath}
For $n\geq n_{0}(\epsilon)$ we have
\begin{eqnarray}
\gamma_{\phi^{n}}(\sigma_{\phi^{n}}(s_{*})+\epsilon)&=&\inf\lbrace s:\sigma_{\phi^{n}}(s) > \sigma_{\phi^{n}}(s_{*})+\epsilon\rbrace\nonumber\\
&\leq& \inf\lbrace s:\sigma_{\phi}(s) > \sigma_{\phi^{n}}(s_{*})+2\epsilon\rbrace\nonumber\\
&\leq& \inf\lbrace s:\sigma_{\phi}(s) > \sigma_{\phi}(s_{*})+3\epsilon\rbrace\nonumber\\
&=&\gamma_{\phi}(\sigma_{\phi}(s_{*})+3\epsilon)\nonumber
\end{eqnarray}
Likewise, for $n$ large enough
\begin{displaymath}
\gamma_{\phi^{n}}(\sigma_{\phi^{n}}(s_{*})-\epsilon)\geq  \gamma_{\phi}(\sigma_{\phi}(s_{*})-3\epsilon)
\end{displaymath}
Therefore, for $n$ large enough, we have
\begin{equation}
\gamma_{\phi}(\sigma_{\phi}(s_{*})-3\epsilon)\leq \gamma_{\phi^{n}}(\sigma_{\phi}(s_{*})) \leq \gamma_{\phi}(\sigma_{\phi}(s_{*})+3\epsilon)\label{LemmaTimeChange2Step1}
\end{equation}
Therefore, (\ref{LemmaTimeChange2Step1}) implies that
\begin{displaymath}
 \gamma_{\phi^{n}}(\sigma_{\phi}(s_{*}))\rightarrow \gamma_{\phi}(\sigma_{\phi}(s_{*})) \textrm{ as } n\rightarrow\infty,
\end{displaymath}
or in other words
\begin{displaymath}
 \gamma_{\phi^{n}}(t_{*})\rightarrow \gamma_{\phi}(t_{*}) \textrm{ as } n\rightarrow\infty,
\end{displaymath}
which concludes the proof of part (ii) of the lemma.

Lastly, we prove part (iii) of the lemma. Let $t\in[0,\sigma_{\phi}(T)]$. We write
\begin{eqnarray}
 |\phi^{n}(\gamma_{\phi^{n}}(t))-\phi(\gamma_{\phi}(t))|&\leq&
 |\phi^{n}(\gamma_{\phi}(t))-\phi(\gamma_{\phi}(t))|\label{LemmaTimePart3Step1}\\
&+& |\phi^{n}(\gamma_{\phi^{n}}(t))-\phi^{n}(\gamma_{\phi}(t))|\nonumber
\end{eqnarray}
The uniform convergence of $\phi^{n}$ to $\phi$ guarantees that the first term in the right hand side of (\ref{LemmaTimePart3Step1}) can be made arbitrarily small for $n$ large enough. Moreover, part (ii), guarantees that the second term can be arbitrarily small provided that $t$ is a continuity point of $\gamma_{\phi}(\cdot)$. Hence, it is enough to consider the case where $t$ is not a continuity point of $\gamma_{\phi}(\cdot)$. We claim that the following two statements hold.
\begin{enumerate}[a)]
 \item For every $\epsilon>0$ there is a $n_{0}(\epsilon)>0$ such that for every $t\in[0,\sigma_{\phi}(T)]$ and for every $n>n_{0}(\epsilon)$ we have that
\begin{displaymath}
 \gamma_{\phi^{n}}(t)\in[\gamma_{\phi}(t-\epsilon), \gamma_{\phi}(t+\epsilon)].
\end{displaymath}
\item The function $\phi(s)$ is constant for $s\in[\gamma_{\phi}^{-}(t), \gamma_{\phi}(t)]$, where we set $\gamma_{\phi}^{-}(t)=\lim_{s\rightarrow t^{-}}\gamma_{\phi}(s)$.
\end{enumerate}
These statements together with the uniform convergence $\phi^{n}$ to $\phi$ guarantee that the second term in the right hand side of (\ref{LemmaTimePart3Step1}) can be made arbitrarily small for $n$ large enough even if $t$ is not a continuity point of $\gamma_{\phi}(\cdot)$. Hence, it remains to prove the claim. Part a) follows by an arguement similar to the one that was used in the proof of part (ii) of this lemma (see (\ref{LemmaTimeChange2Step1})) and part b) is Lemma \ref{LemmaTimeChange1a}.

This concludes the proof of the lemma.
\end{proof}

\begin{proof}[Proof of Proposition \ref{Proposition21}]
Recall that $X^{\epsilon}_{t}=u_{-1}[\sqrt{\epsilon}W_{\tau_{u_{-1}(\sqrt{\epsilon}W)}(t)}]$, where $\tau_{u_{-1}(\sqrt{\epsilon}W)}(t)$ is defined as in (\ref{RandomChangeTimeWithoutN_1}) with $\sqrt{\epsilon}W$ in place of $W$. Let us also define $\hat{X}_{t}=u_{-1}[W_{\hat{\tau}_{u_{-1}(W)}(t)}]$, where $\hat{\tau}_{u_{-1}(W)}(t)$ is defined similarly to (\ref{RandomChangeTimeWithoutN_1}).
Then, we easily see that
\begin{eqnarray}
t&=&\int^{\tau^{n}_{u_{-1}(\sqrt{\epsilon}W)}(t)}_{0}\frac{1}{2}\frac{dv_{n}}{du}(u_{-1}(\sqrt{\epsilon}W_{s}))ds\nonumber\\
&=&\frac{1}{\epsilon}\int^{\epsilon\tau^{n}_{u_{-1}(\sqrt{\epsilon}W)}(t)}_{0}\frac{1}{2}\frac{dv_{n}}{du}(u_{-1}(W_{s}))ds.\nonumber
\end{eqnarray}
On the other hand, it is also true that
\begin{displaymath}
t=\frac{1}{\epsilon}\int^{\hat{\tau}^{n}_{u_{-1}(W)}(\epsilon t)}_{0}\frac{1}{2}\frac{dv_{n}}{du}(u_{-1}(W_{s}))ds.
\end{displaymath}
The latter imply that
\begin{displaymath}
\int^{\epsilon\tau^{n}_{u_{-1}(\sqrt{\epsilon}W)}(t)}_{0}\frac{1}{2}\frac{dv_{n}}{du}(u_{-1}(W_{s}))ds=
\int^{\hat{\tau}^{n}_{u_{-1}(W)}(\epsilon
t)}_{0}\frac{1}{2}\frac{dv_{n}}{du}(u_{-1}(W_{s}))ds
\end{displaymath}
Taking into account that $\tau^{n}_{u_{-1}(\sqrt{\epsilon}W)}(t)$
and $\hat{\tau}^{n}_{u_{-1}(W)}(t)$ are strictly increasing in $t$
and that $\frac{dv_{n}}{du}$ is strictly positive, we get that
almost surely
\begin{displaymath}
\epsilon\tau^{n}_{u_{-1}(\sqrt{\epsilon}W)}(t)=\hat{\tau}^{n}_{u_{-1}(W)}(\epsilon
t).
\end{displaymath}
The latter implies that
\begin{eqnarray}
X^{\epsilon}_{t}&=&u_{-1}[\sqrt{\epsilon}W_{\tau_{u_{-1}(\sqrt{\epsilon}W)}(t)}]\nonumber\\
&=&u_{-1}[W_{\hat{\tau}_{u_{-1}(W)}(\epsilon t)}]\nonumber\\
&=&\hat{X}_{\epsilon t}.\label{Proposition21Step3}
\end{eqnarray}
Let now $I$ be an interval in $\mathbb{R}$ and $T_{I}$ and $\hat{T}_{I}$
be the exit times for $X^{\epsilon}_{t}$, $\hat{X}_{t}$ from $I$
respectively. Then using (\ref{Proposition21Step3}), the
infinitesimal generator of $X^{\epsilon}_{t}$ is
\begin{eqnarray}
\lim_{dI\rightarrow 0}\frac{\mathbb{E}_{x}f(X^{\epsilon}_{T_{I}})-f(x)}{\mathbb{E}_{x}T_{I}}&=&\lim_{dI\rightarrow 0}\frac{\mathbb{E}_{x}f(\hat{X}_{\hat{T}_{I}})-f(x)}{\mathbb{E}_{x}\hat{T}_{I}}\frac{\mathbb{E}_{x}\hat{T}_{I}}{\mathbb{E}_{x}T_{I}}\nonumber\\
&=&\epsilon D_{v}D_{u},\nonumber
\end{eqnarray}
where $dI$ is the length of $I$. This concludes the proof of the
proposition.
\end{proof}

\vspace{0.2cm}

\begin{proof}[Proof of Lemma \ref{ExponentiallyTight}]
The result can be easily derived by the representation
$Y^{\epsilon}_{t}=\sqrt{\epsilon}W_{\tau_{u_{-1}(\sqrt{\epsilon}W)}(t)}$
and Theorem 4.1 of \cite{FengKurtz}.
\end{proof}

\section{Generalized reaction-diffusion equations and some results on wave front propagation}
In this section we discuss reaction-diffusion equations governed by
a generalized elliptic operator $D_{v}D_{u}$. We will refer to them
as generalized reaction diffusion equations. We apply Theorem
\ref{MainTheorem} to the problem of wave front propagation for these
type of reaction-diffusion equations in the case where the
non-linear term is of K-P-P type.

Let $D_{v}D_{u}$ be the operator introduced in the introduction. For $f\in\mathcal{D}(D_{v}D_{u})$, i.e. for functions that belong to the domain of definition of the $D_{v}D_{u}$ operator, consider the following reaction diffusion equation
\begin{eqnarray}
f_{t}(t,x)&=& D_{v}D_{u}f(t,x)+c(x,f(t,x))f(t,x)\nonumber\\
f(0,x)&=&g(x)\label{RDE1}
\end{eqnarray}
We shall consider the generalized solution to (\ref{RDE1}). We define the operator
\begin{displaymath}
Af=-f_{t}+D_{v}D_{u}f.
\end{displaymath}
As it is well known, there exists a corresponding Markov family $Y_{s}=(t-s,X_{s})$ in the state space $(-\infty, T]\times \mathbb{R}, T>0$. Here $X_{s}$ is the strong Markov process governed by the operator $ D_{v}D_{u}$. Moreover, we define $f(t,x)=g(x)$ for $t\leq 0$. Using
the Feynman-Kac formula, the solution to this problem  may be written as follows:
\begin{equation}
f(t,x)=\mathbb{E}_{x}g(X_{t})e^{\int_{0}^{t}c(X_{s},f(t-s,X_{s}))ds}
\label{FeynmanKacFormula}
\end{equation}
We shall call the solution to equation
(\ref{FeynmanKacFormula}) the generalized solution to equation
(\ref{RDE1}). Throughout this section, we will make the following assumption.
\begin{ass}
The function $c(x,f)$ is uniformly bounded in all arguments, continuous in $x$ and Lipschitz continuous in $f$. 
The initial profile $g(x)$ is a bounded, nonnegative  function that can have at most a finite number of simple discontinuities.
\begin{flushright}
$\square$
\end{flushright}
\end{ass}

One can prove, via the standard method of successive approximations,
that under the aforementioned assumption, there exists a unique
generalized solution for the problem (\ref{RDE1}). Namely, the
equation (\ref{FeynmanKacFormula}) has a unique solution (see
chapter 5 of \cite{F1} for more details).

\vspace{0.2cm}

Generalized reaction diffusion equations, like (\ref{RDE1}), can
appear in applications as, for example, the limit of a family of
standard reaction-diffusion equations.

Let us demonstrate this in a simple case. Consider the family of problems
\begin{eqnarray}
f^{n}_{t}(t,x)&=& L_{n}f^{n}(t,x)+c(x,f^{n}(t,x))f^{n}(t,x)\nonumber\\
f^{n}(0,x)&=&g(x)\label{RDE2}
\end{eqnarray}
where $L_{n}$ is a family of standard second order elliptic operators
\begin{equation}
L_{n}f(x)=\frac{1}{2}a_{n}(x) \frac{d^{2}f(x)}{dx^{2}}+b_{n}(x)\frac{df(x)}{dx}.\label{FamilySmoothOperator}
\end{equation}
Assume that the limits of the coefficients  $a_{n}(x)$ and $b_{n}(x)$ are discontinuous as follows
\begin{eqnarray}
\lim_{n\rightarrow \infty}a_{n}(x)= a(x)=\begin{cases}a_{+}(x), & x>0 \\
                                                     a_{-}(x), & x<0. \end{cases}\nonumber
\end{eqnarray}
and
\begin{eqnarray}
\lim_{n\rightarrow \infty}b_{n}(x)= b(x)=\begin{cases}b_{+}(x), & x>0 \\
                                                     b_{-}(x), & x<0. \end{cases}\nonumber
\end{eqnarray}

where $a(x)$ and $b(x)$ may not be defined or be discontinuous at
$x=0$. Define
\begin{displaymath}
u_{n}(x)=\int_{0}^{x} e^{-\int_{0}^{y}\frac{2b_{n}(z)}{a_{n}(z)}dz} dy \hspace{0.3cm}\textrm{ and }\hspace{0.3cm} v_{n}(x)=\int_{0}^{x}\frac{2}{a_{n}(y)}e^{\int_{0}^{y}\frac{2b_{n}(z)}{a_{n}(z)}dz} dy.
\end{displaymath}
We observe that $D_{v_{n}}D_{u_{n}}f=L_{n}f$. Let $X^{n}_{t}$ be the
one dimensional Markov process with infinitesimal generator $L_{n}$
and let $\tau^{n}(-\delta,\delta)=\inf \lbrace t: X^{n}_{t}\notin
(-\delta,\delta)\rbrace$. Define the quantities
\begin{eqnarray}
P_{r}&=&\lim_{\delta\downarrow 0}\lim_{n\rightarrow
\infty}\mathbb{P}_{x}(X^{n}_{\tau^{n}}=\delta)=\lim_{\delta\downarrow
0}\lim_{n\rightarrow
\infty}\frac{u_{n}(x)-u_{n}(-\delta)}{u_{n}(\delta)-u_{n}(-\delta)}\nonumber\\
P_{l}&=&\lim_{\delta\downarrow 0}\lim_{n\rightarrow
\infty}\mathbb{P}_{x}(X^{n}_{\tau^{n}}=-\delta)=\lim_{\delta\downarrow
0}\lim_{n\rightarrow
\infty}\frac{u_{n}(\delta)-u_{n}(x)}{u_{n}(\delta)-u_{n}(-\delta)}\nonumber\\
\kappa &=&\lim_{\delta\downarrow 0}\lim_{n\rightarrow
\infty}\mathbb{E}_{x}\frac{1}{\delta}\tau^{n}(-\delta,\delta).\nonumber
\end{eqnarray}
The function $m_{n}(x)=\mathbb{E}_{x}\tau^{n}(-\delta,\delta)$ is
solution to the equation $D_{v_{n}}D_{u_{n}}m_{n}(x)=-1$ with
boundary conditions $m_{n}(-\delta)=m_{n}(\delta)=0$.

If $P_{r}=P_{l}=\frac{1}{2}$ and $\kappa=0$, then the limit (in
distribution) of $X^{n}_{t}$ behaves locally like a Wiener process.
But, of course, this is not the case in general. Define the
functions
\begin{eqnarray}
u(x)=\begin{cases}\frac{1}{P_{r}}\int_{0}^{x} e^{-\int_{0}^{y}\frac{2b(z)}{a(z)}dz} dy, & x\geq0 \\
            \frac{1}{P_{l}}\int_{0}^{x} e^{-\int_{0}^{y}\frac{2b(z)}{a(z)}dz} dy   , & x<0. \end{cases}\nonumber
\end{eqnarray}
\begin{eqnarray}
v(x)=\begin{cases}\kappa+P_{r}\int_{0}^{x}\frac{2}{a(y)}e^{\int_{0}^{y}\frac{2b(z)}{a(z)}dz}
dy, & x\geq0 \\
            P_{l} \int_{0}^{x}\frac{2}{a(y)}e^{\int_{0}^{y}\frac{2b(z)}{a(z)}dz}dy   , & x<0. \end{cases}
\nonumber
\end{eqnarray}

and assume that $P_{r}$, $P_{l}$, $\kappa$ and that the limit
$\lim_{n\rightarrow\infty}
e^{-\int_{0}^{y}\frac{2b_{n}(z)}{a_{n}(z)}dz}$ exists for all
$y\in\mathbb{R}$. It is easy to see that
\begin{eqnarray}
u(x)&=&\lim_{n\rightarrow\infty}u_{n}(x) \textrm{ for every } x\in
\mathbb{R}\nonumber\\
v(x)&=&\lim_{n\rightarrow\infty}v_{n}(x) \textrm{ for every } x\in
\mathbb{R}\setminus\{0\}\nonumber
\end{eqnarray}
Then, it can be shown (see \cite{Hyejin} for more details) that
\begin{displaymath}
\lim_{n\rightarrow\infty} f^{n}(t,x)=f(t,x),
\end{displaymath}
where $f_{n}(t,x)$ and $f(t,x)$ are the
generalized solutions to (\ref{RDE2}) and (\ref{RDE1}) respectively.
In this case, the domain of definition of the $D_{v}D_{u}$ operator
is
\begin{eqnarray}
     \mathcal{D} (D_{v}D_{u})=\lbrace &f:& f\in \mathcal{C}_{c}(\mathbb{R})\textrm{, with }f_{x},f_{xx}\in\mathcal{C}(\mathbb{R}\setminus\{0\}),\nonumber\\
& &  P_{r}f'_{+}(0)-P_{l}f'_{-}(0)=\kappa DvDuf(0) \textrm{ and}\nonumber\\
& &  DvDuf(0)=\lim_{x\rightarrow 0^{+}}DvDuf(x)=\lim_{x\rightarrow 0^{-}}DvDuf(x) \rbrace. \nonumber
\end{eqnarray}

Let us study now the problem of wave front propagation for the following equation. For $f\in\mathcal{D}(D_{v}D_{u})$ consider the generalized solution to the following reaction diffusion equation
\begin{eqnarray}
f^{\epsilon}_{t}(t,x)&=&\epsilon D_{v}D_{u}f^{\epsilon}(t,x)+\frac{1}{\epsilon}c(x,f^{\epsilon}(t,x))f^{\epsilon}(t,x)\nonumber\\
f^{\epsilon}(0,x)&=&g(x)\label{RDE3}
\end{eqnarray}

For brevity, we consider the initial profile of (\ref{RDE3}) to be
given by $g(x)=\chi_{x\leq 0}$, where $\chi_{x\leq 0}$ is the
characteristic function of the set $\{x: x\leq 0\}$. Moreover, the
non linear function $c(x,f)$ is assumed to be of
Kolmogorov-Petrovskii-Piskunov (K-P-P) type, i.e. it is Lipschitz
continuous in $f\in\mathbb{R}$, positive for $f<1$, negative for
$f>1$ and $c(x)=c(x,0)=\max_{0\leq f\leq 1}c(x,f)$. Generalized
reaction diffusion equations that have a K-P-P type nonlinear term
are called K-P-P generalized reaction diffusion equations.

It is not difficult to see that the classical results of Freidlin \cite{F1} on wave front propagation of K-P-P reaction diffusion equations hold in this case as well. Let us define
\begin{equation}
W(t,x)=\sup\{\int_{0}^{t}c(\phi_{s})ds-S_{0t}(\phi):\phi \in \mathcal{C}_{0,t}, \phi_{0}=x, \phi_{t}\leq 0\}.\label{DefinitionOfWIntro}
\end{equation}
where $c(x)=c(x,0)=\max_{0\leq f\leq 1}c(x,f)$ and $S_{0t}(\phi)$,
defined by (\ref{ActionFunctional}), is the action functional for
the Markov process $X^{\epsilon}_{t}$ whose infinitesimal generator
is $\epsilon D_{v}D_{u}$.

We say that condition (N) is satisfied if for any $t>0$ and $(t,x)\in
\{(t,x):W(t,x)=0\}:$

\begin{eqnarray}
W(t,x)=\sup\{
\int_{0}^{t}c(\phi_{s})ds-S_{0t}(\phi)&:&\phi_{0}=x,\phi_{t}\leq 0,\nonumber\\
& & (t-s,\phi_{s})\in \{(t,x):W(t,x)<0\}\}.\nonumber
\end{eqnarray}
\begin{thm}(Freidlin \cite{F1}). Let $f^{\epsilon}(t,x)$ be the unique generalized solution to (\ref{RDE3}). Then,
under condition (N) we have:
\begin{eqnarray}
\lim_{\epsilon \downarrow 0}f^{\epsilon}(t,x)=\begin{cases}1, & W(t,x)>0 \\
                                                     0, & W(t,x)<0. \end{cases}
\end{eqnarray}
The convergence is uniform on every compactum lying in the region $\lbrace(t,x):t>0, x\in\mathbb{R}, W(t,x)>0\rbrace$ and $\lbrace(t,x):t>0, x\in\mathbb{R}, W(t,x)<0\rbrace$ respectively.\label{TheoremFreidlin1}
\end{thm}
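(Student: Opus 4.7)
The plan is to apply Freidlin's classical K--P--P scheme, now using the large deviations principle from Theorem \ref{MainTheorem} in place of the standard Freidlin--Wentzell one. Starting from the Feynman--Kac representation
\[
f^\epsilon(t,x) = \mathbb{E}_x\!\left[g(X^\epsilon_t)\exp\!\left(\tfrac{1}{\epsilon}\int_0^t c(X^\epsilon_s, f^\epsilon(t-s, X^\epsilon_s))\,ds\right)\right],
\]
one treats the regions $\{W<0\}$ and $\{W>0\}$ separately via Varadhan-type Laplace asymptotics, invoking the action functional $S_{0t}$ from (\ref{ActionFunctional}).

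For the upper bound in $\{W(t,x)<0\}$, the K--P--P hypothesis $c(x,f)\le c(x)$ together with $g=\chi_{\{\cdot\le 0\}}$ gives
\[
f^\epsilon(t,x) \le \mathbb{E}_x\!\left[\chi_{\{X^\epsilon_t\le 0\}}\exp\!\left(\tfrac{1}{\epsilon}\int_0^t c(X^\epsilon_s)\,ds\right)\right].
\]
A standard Laplace upper bound, using the large deviations upper estimate for $X^\epsilon$ (inherited from Theorem \ref{MainTheoremHelp1}(ii) via the contraction principle), the lower semicontinuity and compactness of level sets of $S_{0t}$ (Theorem \ref{MainTheoremHelp2}), and the exponential tightness of Lemma \ref{ExponentiallyTight}, yields
\[
\limsup_{\epsilon\downarrow 0}\epsilon\log f^\epsilon(t,x) \le \sup\Bigl\{\int_0^t c(\phi_s)\,ds - S_{0t}(\phi):\phi_0=x,\ \phi_t\le 0\Bigr\} = W(t,x),
\]
uniformly on compact subsets of $\{W<0\}$, whence $f^\epsilon\to 0$ there.

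For the lower bound in $\{W(t,x)>0\}$, fix $(t,x)$ and $\eta>0$ and invoke condition (N) to pick a near-maximizer $\phi^*$ for $W(t,x)$ with $(t-s,\phi^*_s)\in\{W<0\}$ for $s\in(0,t)$. By the already established upper bound, $f^\epsilon\to 0$ uniformly on a compact neighborhood of $\{(t-s,\phi^*_s):s\in(0,t)\}$, so whenever $X^\epsilon$ stays in a tubular neighborhood of $\phi^*$ the nonlinearity satisfies $c(X^\epsilon_s, f^\epsilon(t-s,X^\epsilon_s))\ge c(X^\epsilon_s)-\eta$ for $\epsilon$ small enough. Inserting this into the Feynman--Kac formula and applying the large deviations lower bound (Theorem \ref{MainTheoremHelp1}(i)) gives
\[
f^\epsilon(t,x) \ge \exp\!\left(\tfrac{1}{\epsilon}\bigl(W(t,x)-3\eta\bigr)\right).
\]
Since $f^\epsilon\le 1$ by the K--P--P comparison principle, the only way to reconcile this with $W(t,x)>0$ is that the linearization $c(\cdot,f^\epsilon)\approx c(\cdot,0)$ eventually fails, i.e.\ $f^\epsilon$ is forced up to $1$ on compacta of $\{W>0\}$; a standard bootstrap (partition $[0,t]$, apply the Markov property, and iterate) in the spirit of Chapter~5 of \cite{F1} makes this rigorous.

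The main obstacle, already absorbed into section~2, is that the generalized action functional $S_{0T}$ in (\ref{ActionFunctional}) is much less regular than its classical counterpart: the time change $\sigma_\phi$ may fail to be strictly increasing at jumps of $v$, extremal paths can spend positive Lebesgue time at non-smoothness points of $u$ or $v$, and the rate functional couples to $X^\epsilon$ only through the Volkonskii-type representation $X^\epsilon_t=u_{-1}(\sqrt{\epsilon}\,W_{\tau_{u_{-1}(\sqrt{\epsilon}W)}(t)})$ of Proposition \ref{Proposition21}. Once these subtleties have been handled through the approximation Lemmas \ref{LemmaTimeChange1}--\ref{LemmaTimeChange2} and packaged into Theorems \ref{MainTheoremHelp1}--\ref{MainTheoremHelp2}, the Freidlin argument transfers essentially verbatim, and the uniform convergence on compacta follows from the continuity of $(t,x)\mapsto W(t,x)$ together with uniformity in the starting point provided by exponential tightness.
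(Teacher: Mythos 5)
The paper does not actually prove this theorem: it is stated as a citation of Freidlin \cite{F1}, with the remark that the classical K--P--P wave-front argument carries over once the action functional of Theorem \ref{MainTheorem} is available. Your outline reproduces exactly that classical scheme (Feynman--Kac representation, Laplace upper bound on $\{W<0\}$, condition (N) plus the lower bound and a Markov-property bootstrap on $\{W>0\}$), so it is correct and takes essentially the same approach the paper intends.
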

Hence, the equation $W(t,x)=0$ defines the position of the interface
(wavefront) between areas where $f^{\epsilon}$ (for $\epsilon>0$
small enough) is close to $0$ and to $1$. Moreover, $W(t,x)$ is a
continuous function, increasing in $t$.

We shall consider a simple example that illustrates the
applicability of Theorem \ref{MainTheorem}.
Assume, for brevity, that
\begin{eqnarray}
u(x)&=&x\nonumber\\
v(x)&=&\begin{cases}Ax, & x<x_{1} \\
              \kappa+Ax, & x_{1}\leq x \leq x_{2} \\
              \kappa+Ax+B(x-x_{2}), & x \geq x_{2}. \end{cases}\label{SpecificFunctions}\\
c(x)&=&c(x,0)=c=\textrm{constant},\nonumber
\end{eqnarray}
where $\kappa,A$ and $B$ are positive constants and $0<x_{1}<x_{2}$.
Of course $\kappa$ is the jump of the function $v(x)$ at $x=x_{1}$.
Moreover, $v(x)$ has a corner point at $x=x_{2}$.

The process $X^{\epsilon}_{t}$ that is governed by
the operator $\epsilon D_{v}D_{u}$ is a time changed Wiener process with delay at $x=x_{1}$.

We shall derive
the position of the wave front for this simple case.

It is clear that inside the half lines and line segments $\lbrace
x<x_{1}\rbrace, \lbrace x_{1}<x<x_{2}\rbrace$ and $\lbrace
x>x_{2}\rbrace$ the process $X^{\epsilon}_{t}$ that is governed by
the operator $\epsilon D_{v}D_{u}$ behaves like a standard Wiener
process. Hence, the extremals $\phi$ of the variational problem
(\ref{DefinitionOfWIntro}) for the functional
$R_{0t}(\phi)=ct-S_{0t}(\phi)$ are line segments. Moreover, clearly,
condition (N) holds.

The position of the wave front (interface) for any  couple $(t,x)$
is given by the equation $W(t,x)=0$. Let $t_{*}=t_{*}(x)$ satisfy
the equation $W(t_{*}(x),x)=0$. Such a $t_{*}(x)$ is defined in a
unique way.

For $x\in[0,x_{1})$ the position of the wave front is
\begin{equation}
W(t_{*},x)=0\Rightarrow ct_{*}-\frac{A}{4}\frac{x^{2}}{t_{*}}=0
\Rightarrow t_{*}(x)=\sqrt{\frac{A}{4c}}x
\label{PositionOfWaveFront1}
\end{equation}

For $x\in[x_{1},x_{2})$ the
position of the wave front is as follows. Assume that $0\leq\mu_{0}\leq\mu_{1}\leq
t_{*}$  and that for $t\in[0,\mu_{0}]$ and for $t\in[\mu_{1},t_{*}]$ the function $\phi$ is linear. For $t\in[\mu_{0},\mu_{1}]$ we assume that $\phi(t)=x_{1}$. Straightforward algebra shows that

\begin{eqnarray}
\sigma_{\phi}(t)&=&\begin{cases}\frac{2}{A}t, & 0\leq t\leq \mu_{0} \\
              \frac{2}{A}\mu_{0}, & \mu_{0}\leq t < \mu_{1} \\
              \frac{2}{A}(t-\mu_{1}+\mu_{0}), & \mu_{1}\leq t\leq t_{*}. \end{cases}\nonumber\\
\phi(\gamma_{\phi}(t))&=&\begin{cases}\frac{x_{1}-x}{\mu_{0}}\frac{A}{2}t+x, & 0\leq t\leq \frac{2}{A}\mu_{0} \\
                            -\frac{x_{1}}{t_{*}-\mu_{1}}(\frac{A}{2}t+\mu_{1}-\mu_{0})+\frac{x_{1}t_{*}}{t_{*}-\mu_{1}}, & \frac{2}{A}\mu_{0}\leq t\leq \frac{2}{A}(t_{*}-\mu_{1}+\mu_{0}). \end{cases}\nonumber
\end{eqnarray}
Therefore, we get
\begin{eqnarray}
W(t_{*},x)=0&\Rightarrow& ct_{*}-\inf_{0\leq\mu_{0}\leq\mu_{1}\leq
t_{*}}\{\frac{A}{4}\frac{(x-x_{1})^{2}}{\mu_{0}}+\frac{A}{4}\frac{x_{1}^{2}}{t_{*}-\mu_{1}}\}=0\nonumber\\
&\Rightarrow& t_{*}(x)=\sqrt{\frac{A}{4c}}x
\label{PositionOfWaveFront2}
\end{eqnarray}
In a similar fashion one can show that for $x\in[x_{2},\infty)$ the
position of the wave front is given by
\begin{eqnarray}
W(t_{*},x)=0&\Rightarrow& ct_{*}-\inf_{0\leq\mu_{0}\leq\mu_{1}\leq
t_{*}}\lbrace \frac{A+B}{4}\frac{(x-x_{2})^{2}}{\mu_{0}}
+\frac{A}{4}\frac{x^{2}_{2}}{t_{*}-\mu_{1}}\rbrace=0\nonumber\\
&\Rightarrow& t_{*}(x)=\sqrt{\frac{A}{4c}} [x_{2}+\frac{\sqrt{A+B}}{\sqrt{A}}(x-x_{2})].
\label{PositionOfWaveFront3}
\end{eqnarray}

We make the following remarks.

\begin{rem}
When $u(x)$ and $v(x)$ are smooth linear functions, say for example $u(x)=x$ and $v(x)=Ax$, then the $D_{v}D_{u}$ operator corresponds to a standard Wiener process with a diffusion coefficient that depends on the slopes of $u$ and $v$. In particular, for the case  $u(x)=x$ and $v(x)=Ax$, we have that $D_{v}D_{u}=\frac{1}{A}\frac{d^{2}}{dx^{2}}$, the diffusion coefficient is $\sqrt{\frac{2}{A}}$ and, as it is well known (see for example \cite{KPP} and \cite{F1}), the front travels with constant K-P-P speed $\sqrt{\frac{4c}{A}}$. However, as we can see from equations (\ref{PositionOfWaveFront1})-(\ref{PositionOfWaveFront2}) and (\ref{PositionOfWaveFront3}), the corner points of $u$ and $v$ functions cause a change in the speed of propagation of the front. In particular, in the example considered above, the wave front travels with speed $\sqrt{\frac{4c}{A}}$ for $x<x_{2}$ and with speed  $\sqrt{\frac{4c}{A+B}}$ for $x>x_{2}$. Namely, the speed of propagation is different for different areas of the semi-axis $\lbrace x:x>0\rbrace$.
\begin{flushright}
$\square$
\end{flushright}
\end{rem}

\begin{rem}
Moreover, a careful inspection of the calculations above shows the quite remarkable result that even
though the function $v$ has a discontinuity at the point $x=x_{1}$,
the action functional, evaluated at the function $\phi$ that attains the supremum of (\ref{DefinitionOfWIntro}), does not see this. This implies that the discontinuity of $v$ at the point $x=x_{1}$ does not affect the propagation of the wave front and, in particular, it does not cause delay of the wave front. By delay of the wave front we mean the situation where the
wave front stays on a particular point for a positive amount of
time. At first sight, this is counterintuitive
since one would expect the wave front to experience delay at this
point because the underlying process has delay at $x=x_{1}$.  However, as we saw, this is not true for this case. See the next section
for some more detailed discussion on this.
\begin{flushright}
$\square$
\end{flushright}
\end{rem}

\begin{rem}
One may also assume that $c(x)$ is not homogeneous in $x$. For example, one may suppose that
$c(x)=c_{1}>0$ for $x<x^{*}$ and $c(x)=c_{2}>0$ for $x>x^{*}$, where $0<c_{1}<c_{2}$ are constants and $x^{*}$ is some point on the positive x-axis. It is well known, \cite{F1}, that in the case of standard reaction-diffusion equations, i.e. when the operator is the standard second order elliptic operator, the condition $c_{2}>2c_{1}$ leads to  jumps of the wave front (for more details see \cite{F1}). It is easy to see that the aforementioned effects carry out in the case of generalized reaction diffusions as well.
\begin{flushright}
$\square$
\end{flushright}
\end{rem}

\begin{rem}
In this example we assumed that $u(x)=x$ just for brevity. Of course, one could also assume that $u$ has corner points. Then the phenomena that one observes are similar to the ones described above. Moreover, one can easily extend the aforementioned to the case where $u$ and $v$ have more than one non smoothness points.
\begin{flushright}
$\square$
\end{flushright}
\end{rem}

These complete the study of wave front propagation for piecewise linear functions $u$
and $v$.

\section{Concluding remarks}
In this paper we considered the large deviations principle for a
large class of one dimensional strong Markov processes that are
continuous with probability one. These processes were uniquely
characterized by Feller \cite{Feller} by a generalized second order
differential operator $D_{v}D_{u}$ and its domain of definition. We
derived the action functional for a strong Markov process
$X^{\epsilon}_{t}$ with operator $\epsilon D_{v}D_{u}$. Of course,
such a process can be derived by the process $X_{t}$ that is
governed by the operator $D_{v}D_{u}$ through a time change
$t\rightarrow \epsilon t$, i.e. $X^{\epsilon}_{t}=X_{\epsilon t}$.
We also considered reaction diffusion equations whose operator is a
$D_{v}D_{u}$ operator and studied the problem of wave front
propagation for K-P-P type generalized reaction diffusion equations
in a simple but intuitive setting.

However, the following questions arise naturally.
\begin{enumerate}
\item The process that we considered is governed by an operator of
the form $\epsilon D_{v}D_{u}$, i.e. the $\epsilon$ multiplies the
operator and the functions $v$ and $u$ are independent of
$\epsilon$. A natural question arises.  What kind of dependence of
the functions $v$ and $u$ on $\epsilon$ would guarantee a large
deviations principle for the resulting process $?$ Related to the
latter question is also the following. How could one incorporate the
drift in the action functional in this general setting $?$ In  other words, what is the right formulation of the problem, which would include the usual case (\ref{ActionFunctionalSmooth}), with the drift term $b(\cdot)$ present, as a special case $?$
\item What other phenomena could one observe due to the non-smoothness points of $u$ and $v$ functions $?$ For example, in what scenario would the wave front have delay at particular
points $?$ It is natural to expect delay at points of
discontinuity of the function $v$, since at these points the corresponding process has delay. However, as we saw in the
previous section, the simple situation where $v$ has finitely many discontinuity points and it is independent of $\epsilon$ does not give delay of the front.  The same is true even if we assume that $v$ is discontinuous at every integer point for example. This is because the front has the ``tendency'' to propagate forward and this scenario is not sufficient to ``slow down'' the front at these points. One, probably, needs to consider a more involved situation where $u$ and/or $v$ functions would also depend on $\epsilon$.
\end{enumerate}

We plan to address these questions in a future work.
\section{Acknowledgments}
I would like to thank Professor Mark Freidlin for
our valuable discussions. I would also like to thank Professors Manoussos Grillakis and Sandra Cerrai for their interest in this
work and helpful discussions. 
Lastly, I would like to thank the anonymous referee
for the constructive comments and suggestions that greatly improved the paper.
%%%%%%%%%%%%%%%%%%%%%%%%%%%%%%%%%%%%%%%%%%%%%%%%%%%%%%%%%%%%


\begin{thebibliography}{0}
%%%%%%%%%%%%%%%%%%%%%%%%%%%%%%%%%%%%%%%%%%%%%%%%%%%%%%%%%%%%%%%%%


\bibitem{DupuisEllis}
P. Dupuis, R.S. Ellis,  (1997), A weak convergence approach to the theory of large deviations, John Willey \& Sons, New York.

\bibitem{EithierKurtz}
S.N. Ethier, T.G. Kurtz, (1986), Markov processes: Characterization and
Convergence, Wiley, New York.

\bibitem{Feller}
W. Feller, (1957), Generalized second-order differential operators and their lateral conditions,
Illinois Journal of Math. 1, pp. 459-504.

\bibitem{FengKurtz}
J. Feng, T.G. Kurtz, (2006), Large deviations for stochastic processes, Mathematical Surveys and Monographs, 131, American Mathematical Society, Providence, RI.

\bibitem{F1}
M. Freidlin, (1985), Functional integration and partial differential
equations, Princeton University Press, Princeton, NJ.

\bibitem{F3}
M. Freidlin, (2002), Reaction-diffusion in incompressible fluid: asymptotic problems,
Journal of differential equations, 179, pp. 44-96.

\bibitem{FS1}
M.I. Freidlin, K. Spiliopoulos, (2008), Reaction-diffusion equations with nonlinear boundary conditions in narrow domains,
Journal of Asymptotic Analysis, Vol. 59, No. 3-4, pp. 227-249.

\bibitem{FW5}
M.I. Freidlin, A.D. Wentzell, (1970), On small random perturbations of dynamical
systems, Uspehi Mat. Nauk. 28:1, pp. 3-55.

\bibitem{FW3}
M.I. Freidlin, A.D. Wentzell, (1993), Diffusion processes on graphs and the averaging principle, The Annals of Probability, Vol. 21, No. 4, pp. 2215-2245.

\bibitem{FW4}
M.I. Freidlin, A.D. Wentzell, (1994), Necessary and sufficient conditions for weak convergence of one-dimensional Markov process, 
The Dynkin Festschrift: Markov Processes and their Applications, Birkh\"{a}user,pp. 95-109.



\bibitem{FW1}
M.I. Freidlin, A.D. Wentzell, (1998), Random perturbations of dynamical
systems, Second Edition, Springer-Verlag, New York.

\bibitem{Hyejin}
H. Kim, (2009), On continuous dependence of solutions to parabolic equations on coefficients,  Asymptotic Analysis, Vol. 62, No. 3, pp. 147-162.


\bibitem{KS}
I.Karatzas, S.E.Shreve, (1994), Brownian motion and stochastic calculus,
2nd. ed., Springer-Verlag.

\bibitem{KPP} A. Kolmogorov, I. Petrovskii, N. Piskunov, (1937), \'{E}tude de l'\`{e}quation de la diffusion
avec croissence de la mati\`{e}re et son application a un
probl\`{e}me biologique, Moscov University Bull. Math., Vol. 1, pp. 1-25.

\bibitem{M1}
P. Mandl, (1968), Analytical treatment of one-dimensional Markov processes, Springer: Prague, Academia.


\bibitem{RieszNagy}
F. Riesz, S. B. Nagy, (1955), Functional analysis, translation of second French edition, Ungar: New York.

\bibitem{Royden}
H.L. Royden, (1988), Real Analysis, 3rd ed., Prentice Hall, Englewood Cliffs, NJ.

\bibitem{SpilEJP2009}
K. Spiliopoulos, (2009), Wiener Process with Reflection in Nonsmooth Narrow Tubes, Electronic Journal of Probability, Vol. 14, Paper no. 69, pp. 2011-2037.

\bibitem{Volkonskii1}
V.A. Volkonskii, (1958), Random substitution of time in strong Markov processes, Theory of probability and its applications, Vol. III, No. 3, pp. 310-326.

\bibitem{Volkonskii2}
V.A. Volkonskii, (1959), Continuous one-dimensional Markov processes and additive functionals derived from them, Theory of probability and its applications, Vol. IV, No. 2, pp. 198-200.

\end{thebibliography}
\end{document}